\documentclass[11pt]{article}
\usepackage[utf8]{inputenc}
\usepackage{amsmath, amssymb, amsthm}
\usepackage{graphicx}
\usepackage{geometry}
\usepackage{hyperref}
\usepackage{enumitem}
\usepackage{titlesec}
\usepackage{cite}
\usepackage{mathrsfs}
\usepackage{subcaption}
\usepackage{caption}
\usepackage{comment}
\usepackage{listings}
\usepackage{xcolor}
\usepackage{setspace}
\usepackage[nopatch=footnote]{microtype}

\lstdefinelanguage{Lean}{
  keywords={def, theorem, lemma, example, have, show, from, by, fun, let, in,
            if, then, else, match, with, import, namespace, end, variable,
            universe, sort, Prop, Type, structure, where, noncomputable,
            section, open, calc, forall, exists, return},
  sensitive=true,
  comment=[l]{--},
  morecomment=[s]{/-}{-/},
  morestring=[b]"
}

\titleformat{\section}{\large\bfseries}{\thesection}{1em}{}
\titleformat{\subsection}{\normalsize\bfseries}{\thesubsection}{1em}{}
\titleformat{\subsubsection}{\normalsize\itshape}{\thesubsubsection}{1em}{}

\newtheorem{theorem}{Theorem}[section]
\newtheorem{lemma}[theorem]{Lemma}

\newtheorem{corollary}[theorem]{Corollary}
\theoremstyle{definition}

\newtheorem{remark}[theorem]{Remark}

\title{Formalizing the Classical Isoperimetric Inequality\\in the Two-Dimensional Case}
\author{Miraj Samarakkody}
\date{\today}

\begin{document}

\maketitle

\begin{abstract}
We present a formal verification of the classical isoperimetric inequality in the plane
using the Lean~4 proof assistant and its mathematical library Mathlib.  We follow Adolf
Hurwitz's analytic approach to establish the inequality $L^2 \geq 4\pi A$, which states
that among all simple closed curves of a given perimeter $L$, the circle uniquely
maximizes the enclosed area $A$.  The formalization proceeds in two phases.  In the first
phase, we establish the Fourier-analytic foundations required by Hurwitz's approach: we
formalize orthogonality relations for trigonometric functions over $[-\pi, \pi]$,
Parseval's theorem for classical Fourier series, uniform convergence of Fourier partial
sums via the Weierstrass M-test, term-by-term differentiability, and Wirtinger's
inequality.  In the second phase, we carry out Hurwitz's proof itself: working with simple
closed C\textsuperscript{1} curves given in arc-length parametrization, we reparametrize
over $[0,2\pi]$, establish the shoelace area formula, apply integration by parts, invoke
the AM--GM inequality, apply Wirtinger's inequality, and use the arc-length constraint to
derive the bound $A \leq L^2/(4\pi)$.  We discuss the key formalization challenges
encountered, including the interchange of infinite sums and integrals, term-by-term
differentiation, and the coordination of different indexing conventions within Mathlib.
The complete formalization is available at the
\href{https://github.com/mirajcs/IsoperimetricInequality}{https://github.com/mirajcs/IsoperimetricInequality}.
\end{abstract}

\noindent\textbf{Keywords:} isoperimetric inequality, formal verification, Lean~4, Mathlib,
Fourier series, harmonic analysis.

\tableofcontents

\newpage

\section{Introduction}

\subsection{The Isoperimetric Problem}

The word \emph{isoperimetric} means ``of equal perimeter,'' and the isoperimetric problem asks: among all plane figures with a given perimeter, which one encloses the largest area? The answer, the circle, was known to the ancient Greeks, and the problem has inspired mathematics from antiquity to the present day.

One of the earliest literary descriptions of the problem appears in the legend of Queen Dido, as recorded in Virgil's \emph{Aeneid}.  Dido, a Phoenician princess, was permitted to claim as much land as she could enclose with a single oxhide.  Rather than using the hide as a single piece, she cut it into thin strips, tied them end to end, and arranged them in a semicircle along the seashore, enclosing the maximum possible area.  This classical legend, set around 800~BCE, captures the intuitive idea that the circle is the uniquely optimal shape for the isoperimetric problem.

The ancient Greeks were well aware of the result.  Zenodorus (around 200~BCE) proved~\cite{Osserman1978}, in a work now known only through later summaries, that the circle has greater area than any regular polygon with the same perimeter, and that among polygons with the same number of sides and the same perimeter, the regular polygon has the greatest area.  However, a rigorous proof for \emph{all} curves eluded mathematicians for centuries.

The modern mathematical treatment began in earnest in the nineteenth century.  Jakob Steiner (1838) gave~\cite{Osserman1978, Cabre2017} several ingenious but incomplete proofs that, if an optimal shape exists, it must be the circle.  Steiner's symmetrization argument shows that any non-circular curve can be deformed to increase its enclosed area without changing the perimeter.  However, he did not prove that an optimizer exists, which is necessary for the argument to be complete.  This gap was later noted and filled by Karl Weierstrass~\cite{Osserman1978}, who provided the first rigorous proof via the calculus of variations around 1870, establishing both the existence of an optimizer and the characterization of the circle.

In the early twentieth century, Adolf Hurwitz published a strikingly elegant analytic proof using Fourier series (1902)~\cite{hurwitz1902, li2025isoperimetric}, which is the approach we formalize in this paper.  Hurwitz's proof avoids variational methods entirely, instead reducing the inequality to a comparison of Fourier coefficient sums via Parseval's identity.

\subsection{The Isoperimetric Inequality}

The precise statement we seek to verify is the following.

\begin{theorem}[Isoperimetric Inequality~\cite{docarmo2016, Osserman1978}]
	Let $C$ be a simple closed curve in the plane having perimeter $L$, and let $A$ denote the area of the region enclosed by $C$.  Then
	\[
		L^2 \geq 4\pi A,
	\]
	with equality if and only if $C$ is a circle.
\end{theorem}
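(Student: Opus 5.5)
We follow Hurwitz's Fourier-analytic argument. After a translation and a uniform rescaling (which multiply $L^2$ and $A$ by the same factor), we assume the curve $C$ is $C^1$, positively oriented, and parametrized by arc length $s\in[0,L]$. We then reparametrize by $t = 2\pi s/L \in [0,2\pi]$, obtaining a $2\pi$-periodic map $(x(t),y(t))$ with
\[
x'(t)^2+y'(t)^2 = \frac{L^2}{4\pi^2}.
\]
Integrating this over $[0,2\pi]$ gives
\[
\int_0^{2\pi}\bigl(x'^2+y'^2\bigr)\,dt = \frac{L^2}{2\pi}.
\]
This is the only place the arc-length constraint enters.

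For the area we use the shoelace (Green) formula $A = \int_0^{2\pi} x\,y'\,dt$. Let $\bar y$ be the mean of $y$ and set $\tilde y = y - \bar y$; since $\int x\,dt$ is unchanged and $\int y'\,dt = 0$, translating vertically costs nothing. Apply the elementary (AM--GM) inequality $2x y' \le x^2 + y'^2$ after also centering $x$ so that $\int_0^{2\pi} x\,dt = 0$:
\[
2A \le \int_0^{2\pi}\bigl(x^2 + y'^2\bigr)\,dt .
\]
Now invoke Wirtinger's inequality, established in the first phase via Parseval's identity. For a $2\pi$-periodic $C^1$ function of mean zero it says $\int x^2 \le \int x'^2$. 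This yields
\[
2A \le \int_0^{2\pi}\bigl(x'^2+y'^2\bigr)\,dt = \frac{L^2}{2\pi},
\]
that is, $L^2\ge 4\pi A$.

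For the equality case, both inequalities must be equalities. Equality in Wirtinger forces the Fourier coefficients of $x$ to vanish except at $|n|=1$, so $x = a\cos t + b\sin t$. Equality in AM--GM forces $y' = x$ pointwise (by continuity), so $y = a\sin t - b\cos t + c$. Hence the curve is a circle of radius $\sqrt{a^2+b^2} = L/2\pi$. Conversely, a circle attains equality by direct computation.

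The main obstacle is the analytic input behind Wirtinger's inequality together with its equality case. This requires Parseval for both $x$ and $x'$ and the identification of the Fourier coefficients of $x'$ as $n$ times those of $x$, obtained by integration by parts using periodicity. Through Parseval this gives $\int x'^2 - \int x^2 = \pi\sum_n (n^2-1)(a_n^2+b_n^2) \ge 0$, with equality only if $a_n = b_n = 0$ for $n\ge 2$. I would prove this coefficient relation by integration by parts rather than by term-by-term differentiation, so that only $C^1$ regularity and $L^2$ convergence are needed. A secondary delicate point is justifying $A = \int x y'$ for a simple closed $C^1$ curve, i.e.\ Green's theorem; in the formalization this is taken as the definition of the enclosed area.
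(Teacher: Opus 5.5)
Your proof is correct. Its backbone is the paper's Hurwitz chain: reparametrize over $[0,2\pi]$, write $A=\int_0^{2\pi}x\,y'\,dt$ via the shoelace formula and integration by parts, center $x$, use $2xy'\le x^2+y'^2$, apply Wirtinger to $x$, and finish with $x'^2+y'^2=(L/2\pi)^2$.

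You genuinely differ in how Wirtinger is obtained and in the equality clause.

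\emph{Wirtinger.} The paper differentiates the Fourier series term by term, and its term-by-term theorem needs $\sum n(|a_n|+|b_n|)<\infty$. This does not follow from $C^1$ regularity: $na_n$ and $nb_n$ are the coefficients of the merely continuous function $x'$, whose Fourier series need not converge absolutely, or even pointwise. So the paper's informal step ``$f'$ is continuous, hence term-by-term differentiation applies'' is not covered by its own theorem. The formal main theorem avoids the issue by taking the Parseval and Wirtinger identities for $f$ as hypotheses. Your route gets the coefficients of $x'$ from those of $x$ by integrating by parts over one period, then applies $L^2$ Parseval to both $x$ and $x'$. It needs only a continuous derivative, and it is exactly what would discharge those assumed hypotheses. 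The paper's route yields uniform convergence of the differentiated series, which is more than Hurwitz's argument uses, at the cost of stronger assumptions.

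\emph{Equality.} You actually prove the equality case, which the paper only sketches as future work via the same three observations: $y'=x$ from $\int(x-y')^2=0$ and continuity, $x=a\cos t+b\sin t$ from equality in Wirtinger, and hence a circle of radius $L/2\pi$.

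Two small points. First, like the paper, you are really proving the result for regular $C^1$ curves. No translation or rescaling provides that, so it is an added hypothesis relative to the statement. Second, centering $y$ is superfluous, since vertical translation changes neither $x$ nor $y'$. The fact $\int_0^{2\pi}y'\,dt=0$ is what justifies centering $x$, because $\int(x-\bar x)y'\,dt=\int xy'\,dt$.
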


The inequality $L^2 \geq 4\pi A$ can be rewritten as $A \leq L^2/(4\pi)$.  For a circle of radius $r$, we have $L = 2\pi r$ and $A = \pi r^2$, so $L^2/(4\pi) = \pi r^2 = A$, confirming that circles achieve equality.  The content of the theorem is that no other shape does.

\subsection{Formal Verification and Lean~4}

Formal verification is the process of checking mathematical proofs by machine, using a computer program called a \emph{proof assistant} or \emph{theorem prover}.  Unlike informal mathematical proofs, which rely on the reader's tacit understanding to fill gaps, a formal proof must be presented in a completely explicit logical language that the proof assistant can verify step by step.  The advantage is certainty: if the proof assistant accepts the proof, every logical step has been machine-checked against the rules of the underlying formal system.

Lean~4 is a dependently typed programming language and theorem prover developed by Leonardo de Moura and Sebastian Ullrich at Microsoft Research and later at the Lean Focused Research Organisation~\cite{moura2021lean4}.  Lean~4's type theory is based on the Calculus of Constructions with universe polymorphism, providing both a rich type system for programming and a foundation for formalizing mathematics.  In Lean~4, propositions are types and proofs are terms; this \emph{propositions-as-types} paradigm, also known as the Curry--Howard correspondence~\cite{sorensenUrzyczyn2006, coquandHuet1988}, means that writing a proof of a theorem is syntactically identical to writing a program of the corresponding type.

The mathematical library accompanying Lean~4 is called Mathlib~\cite{moura2021lean4, mathlib2020}. Mathlib is a community-maintained library covering a broad sweep of mathematics, including algebra, topology, measure theory, real and complex analysis, and the theory of Fourier transforms.  The scope of Mathlib makes Lean~4 an attractive target for formalizing classical analysis results: many foundational lemmas are already available, and one need only build the specific arguments on top.

\subsection{Why Formalize the Isoperimetric Inequality?}

The isoperimetric inequality is a classical result with a well-known proof, so one might wonder what is gained by formalizing it.  There are several motivations.

First, formalization provides a definitive record that the proof is correct.  Textbook proofs of the isoperimetric inequality often gloss over technical points: the interchange of infinite sums and integrals, the justification of term-by-term differentiation, the precise regularity conditions on the curve.  A formal proof forces every such step to be made explicit.

Second, the process of formalization reveals the exact logical structure of the argument and which hypotheses are truly necessary.  In Hurwitz's proof, as we will see, the key analytic ingredients are Parseval's theorem and Wirtinger's inequality.  Formalizing the proof makes clear precisely what regularity is needed on the curve (C\textsuperscript{1} with arc-length parametrization) and exactly how Wirtinger's inequality is applied.

Third, the formalization serves as a case study in the use of Mathlib for real analysis. The challenges we encountered, interchanging sums and integrals, working with \texttt{EuclideanSpace} projections, managing $\mathbb{N}$- and $\mathbb{N}^+$-indexed sums, illustrate the current state of Lean~4 for formalizing classical analysis at the graduate level.

\subsection{Proof Strategy and Paper Outline}

Hurwitz's proof proceeds as follows.  Given a simple closed C\textsuperscript{1} curve $\gamma$ of length $L$, we reparametrize it over $[0, 2\pi]$ via $f(\theta) = x(L\theta/(2\pi))$, $g(\theta) = y(L\theta/(2\pi))$.  The area enclosed by $\gamma$ can be expressed as $A = \int_0^{2\pi} f(\theta) g'(\theta)\,d\theta$ after applying the shoelace formula and integration by parts.  The AM--GM inequality gives $A \leq \tfrac{1}{2}\int_0^{2\pi}(f^2 + (g')^2)\,d\theta$.  By Wirtinger's inequality, applied to $f$ (which has zero mean), $\int f^2 \leq \int (f')^2$.  This upgrades the bound to $A \leq \tfrac{1}{2}\int_0^{2\pi}((f')^2+(g')^2)\,d\theta$.  Finally, the arc-length constraint gives $(f')^2+(g')^2 = (L/(2\pi))^2$, and the result follows.

Wirtinger's inequality is itself proved using Parseval's theorem: expressing $\int f^2$ and $\int (f')^2$ as sums of Fourier coefficients and comparing term by term.

The paper is organized as follows.  Section~\ref{sec:lean4} reviews the relevant features of Lean~4 and Mathlib.  Section~\ref{sec:fourier} develops the theory of classical Fourier series and its formalization.  Section~\ref{sec:parseval} presents Parseval's theorem and its formalization.  Section~\ref{sec:wirtinger} proves Wirtinger's inequality. Section~\ref{sec:curves} defines simple closed curves and arc-length parametrization. Section~\ref{sec:hurwitz} presents Hurwitz's proof.  Section~\ref{sec:related} discusses related work, and Section~\ref{sec:conclusion} concludes.

\section{Lean~4 and Mathlib}
\label{sec:lean4}

\subsection{Dependent Type Theory}

Lean~4's logical foundation is the Calculus of Inductive Constructions (CIC)~\cite{coquandHuet1988}, a dependent type theory in which every expression has a type and types themselves can depend on values. In this framework, a theorem is a type and a proof is a term of that type.  For instance, the statement $\forall n : \mathbb{N},\; n + 0 = n$ is a type, and its proof is a term that, given any $n : \mathbb{N}$, produces a proof of the equation $n + 0 = n$.  The key feature distinguishing dependent type theory from simple type theory is that types can \emph{depend on values}: for example, the type \texttt{Fin n} of natural numbers less than $n$ is a type that depends on the value $n : \mathbb{N}$.  This expressiveness allows the statement of theorems about arbitrary mathematical structures in a uniform way.

\subsection{Propositions and Proofs in Lean~4}

In Lean~4, propositions are terms of type \texttt{Prop}, which is a universe of types.
A proof of a proposition $P : \texttt{Prop}$ is a term $h : P$.  The common propositional
connectives are defined as follows:
\begin{itemize}
	\item Conjunction $P \wedge Q$ is the product type \texttt{And P Q}.
	\item Disjunction $P \vee Q$ is the sum type \texttt{Or P Q}.
	\item Implication $P \to Q$ is the function type from $P$ to $Q$.
	\item Universal quantification $\forall x : \alpha,\, P(x)$ is the dependent function type
	      $\Pi_{x : \alpha} P(x)$.
	\item Existential quantification $\exists x : \alpha,\, P(x)$ is the inductive type
	      \texttt{Exists} in \texttt{Prop}, analogous to the dependent sum type
	      $\Sigma_{x : \alpha} P(x)$ but proof-irrelevant: any two proofs of
	      $\exists x,\, P(x)$ are definitionally equal in \texttt{Prop}.
\end{itemize}

Lean~4 proofs can be written in two styles: \emph{term mode}, where one directly constructs the proof term, and \emph{tactic mode}, where one uses a sequence of tactics that incrementally transform the proof obligation.  Most Mathlib proofs use tactic mode, relying on tactics such as \texttt{rw} (rewrite), \texttt{simp} (simplify), \texttt{linarith} (linear arithmetic), \texttt{ring} (ring identities), \texttt{field\_simp} (simplification in fields), and \texttt{gcongr} (generalised congruence for inequalities).

\subsection{Key Mathlib Components}

Our formalization draws on the following parts of Mathlib.

\paragraph{Analysis and integration.}
The library \texttt{Mathlib.MeasureTheory} provides Lebesgue integration via measure theory, the Dominated Convergence Theorem~\cite{rudin1976, brezis2011} \\ (\texttt{MeasureTheory.integral\_tsum\_of\_summable\_norm}), and interval integrals (\texttt{IntervalIntegral}).  The type \texttt{IntervalIntegrable f volume a b} asserts that the function $f$ is integrable on $[a,b]$ with respect to the Lebesgue measure.

\paragraph{Calculus.}
Derivatives are formalized via \texttt{HasDerivAt f f' x}, which asserts that $f$ has derivative $f'$ at $x$ in the sense of Fr\'echet (specialised to one dimension).  The library provides chain rule (\texttt{HasDerivAt.comp}), product rule (\texttt{HasDerivAt.mul}), and other differentiation lemmas.

\paragraph{Infinite sums.}
Unconditional convergence of a series $\sum_{n} f(n)$ is captured by \texttt{Summable f}, and the value of the sum is \texttt{tsum f} (written $\sum' n, f(n)$ in Lean notation). Key lemmas include \texttt{Summable.tsum\_mul\_tsum} and \texttt{Summable.tsum\_prod'} for products and iterated sums.

\paragraph{Uniform convergence.}
The property $F_N \rightrightarrows F$ is expressed by \texttt{TendstoUniformly F\_N F 	atTop}.  The Weierstrass M-test~\cite{rudin1976} is available as \texttt{tendstoUniformly\_tsum\_nat}, and switching the limit and derivative is done via \texttt{hasDerivAt\_of\_tendstoUniformly}.

\paragraph{Euclidean spaces.}
The standard inner product space $\mathbb{R}^n$ is modelled by \texttt{EuclideanSpace~$\mathbb{R}$~(Fin~n)}, which is definitionally equal to the \texttt{PiLp 2} space.  Projecting onto a coordinate is done via \texttt{PiLp.proj~2~(fun~\_~:~Fin~n~=>~$\mathbb{R}$)~i}, which is a continuous linear map and hence has a known \texttt{HasFDerivAt}.

\paragraph{Trigonometric functions.}
The library \texttt{Mathlib.Analysis.SpecialFunctions.Trigonometric} provides $\sin$, $\cos$, their derivatives (\texttt{HasDerivAt.sin}, \texttt{HasDerivAt.cos}), the Pythagorean identity (\texttt{sin\_sq\_add\_cos\_sq}), double-angle formulas, and the vanishing of $\sin(n\pi)$ for integer $n$ (\texttt{sin\_nat\_mul\_pi}).

\subsection{Notation and Conventions}

Throughout the paper we use the following notation.
\begin{itemize}
	\item $a, b : \mathbb{N} \to \mathbb{R}$ denote the Fourier cosine and sine coefficients.
	      In Lean these are \texttt{variable (a b : $\mathbb{N}$ $\to$ $\mathbb{R}$)}.
	\item The indexing type $\mathbb{N}^+$ (positive natural numbers) is written \texttt{Nat+}
	      or $\mathbb{N}^+$ in text; Lean uses \texttt{PNat} or the notation $\mathbb{N}^+$.
	      The embedding $n : \mathbb{N}^+ \hookrightarrow \mathbb{N}$ is coerced automatically.
	\item Infinite sums over $\mathbb{N}^+$ are written $\sum_{n=1}^\infty$ in mathematics
	      and \texttt{sum' n : Nat+, ...} in Lean.
	\item Interval integrals over $[a, b]$ are written
	      \texttt{integral x in a..b, f x} in Lean.
	\item $\pi$ denotes \texttt{Real.pi}, the mathematical constant.
\end{itemize}

\section{Fourier Series: Foundations}
\label{sec:fourier}

\subsection{Classical Fourier Series}

Let $f : \mathbb{R} \to \mathbb{R}$ be a $2\pi$-periodic, piecewise continuous function~\cite{stein2003, zygmund2002}.
The \emph{Fourier series} of $f$ is the formal trigonometric series
\begin{equation}\label{eq:fourier-series}
	f(x) \sim \frac{a_0}{2} + \sum_{n=1}^\infty \bigl(a_n \cos(nx) + b_n \sin(nx)\bigr),
\end{equation}
where the \emph{Fourier coefficients} are
\begin{align}
	a_n & = \frac{1}{\pi}\int_{-\pi}^\pi f(x)\cos(nx)\,dx, \quad n = 0, 1, 2, \ldots, \\
	b_n & = \frac{1}{\pi}\int_{-\pi}^\pi f(x)\sin(nx)\,dx, \quad n = 1, 2, \ldots
\end{align}

\begin{remark}
	We do not assume in our formalization that the coefficients $a_n, b_n$ are derived from	a given function by integration.  Instead, we treat $a, b : \mathbb{N} \to \mathbb{R}$ as	arbitrary sequences and consider the Fourier series they define.  The convergence	properties then follow from summability conditions on the sequences.  This more abstract	approach is natural in Lean, where the connection between a function and its Fourier	coefficients requires additional hypotheses (square-integrability, continuity, etc.) that	are not needed for the algebraic parts of the proof.
\end{remark}

\subsection{Lean Definitions}

In our formalization, the classical Fourier series is defined as a function of $x$ given coefficient sequences $a$ and $b$.

\begin{lstlisting}
variable (a b : Nat -> Real)

/-- The classical Fourier series:
    f(x) = a0/2 + sum_{n=1}^infty (an cos(nx) + bn sin(nx)). -/
def fourierSeries (x : Real) : Real :=
    (1/2) * a 0 + sum' n : Nat+, (a n * cos (n * x) + b n * sin (n * x))

/-- The N-th partial sum:
    SN(x) = a0/2 + sum_{n=1}^N (an cos(nx) + bn sin(nx)). -/
def fourierPartialSum (x : Real) (N : Nat) : Real :=
  (1/2) * a 0 + sum n in Finset.range N,
    (a (n + 1) * cos ((n + 1 : Real) * x) + b (n + 1) * sin ((n + 1 : Real) * x))

/-- The oscillatory part (without the constant term):
    S(x) = sum_{n=1}^infty (an cos(nx) + bn sin(nx)). -/
def fourierSum (x : Real) : Real :=
    sum' n : Nat+, (a n * cos (n * x) + b n * sin (n * x))

/-- The formal derivative series:
    d/dx [an cos(nx) + bn sin(nx)] = -n an sin(nx) + n bn cos(nx). -/
noncomputable def fourierDeriv (a b : Nat -> Real) (x : Real) :=
  sum' n : Nat, (-n * a n * sin (n * x) + n * b n * cos (n * x))
\end{lstlisting}

The decomposition \texttt{fourierSeries = constant + fourierSum} is immediate:

\begin{lstlisting}
lemma fourierSeries_eq (x : Real) :
  fourierSeries a b x = (1/2) * a 0 + fourierSum a b x := by rfl
\end{lstlisting}

\subsection{Orthogonality of Trigonometric Functions}

The foundation of Fourier analysis is the orthogonality of the system
$\{1, \cos x, \sin x, \cos 2x, \sin 2x, \ldots\}$ on $[-\pi, \pi]$.

\begin{lemma}[Trigonometric Orthogonality]\label{lem:orthogonality}
	For all $n, m \in \mathbb{N}^+$:
	\begin{align}
		\int_{-\pi}^\pi \cos(nx)\cos(mx)\,dx & = \begin{cases} 0 & n \neq m, \\ \pi & n = m, \end{cases}
		\label{eq:cos-cos}                                                                               \\
		\int_{-\pi}^\pi \sin(nx)\sin(mx)\,dx & = \begin{cases} 0 & n \neq m, \\ \pi & n = m, \end{cases}
		\label{eq:sin-sin}                                                                               \\
		\int_{-\pi}^\pi \cos(nx)\sin(mx)\,dx & = 0 \quad \text{for all } n, m.
		\label{eq:cos-sin}
	\end{align}
\end{lemma}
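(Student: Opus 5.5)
We will prove all three identities with the product-to-sum formulas, which turn each integral into integrals of single cosines or sines over the symmetric interval $[-\pi,\pi]$. Concretely,
\begin{align*}
	\cos(nx)\cos(mx) &= \tfrac12\bigl(\cos((n-m)x)+\cos((n+m)x)\bigr),\\
	\sin(nx)\sin(mx) &= \tfrac12\bigl(\cos((n-m)x)-\cos((n+m)x)\bigr),\\
	\cos(nx)\sin(mx) &= \tfrac12\bigl(\sin((n+m)x)-\sin((n-m)x)\bigr).
\end{align*}
In Lean these follow from the addition formulas for $\cos$ and $\sin$ applied to $nx \pm mx$, followed by \texttt{ring}. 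By linearity of the interval integral, each of (\ref{eq:cos-cos})--(\ref{eq:cos-sin}) then reduces to two basic facts about an integer $k$.

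The first basic fact is $\int_{-\pi}^{\pi}\cos(kx)\,dx = 0$ for $k\neq 0$, while the integral equals $2\pi$ for $k=0$. For $k\neq 0$ we use the antiderivative $\sin(kx)/k$. Both endpoint values $\sin(\pm k\pi)$ vanish by \texttt{sin\_int\_mul\_pi}, or by \texttt{sin\_nat\_mul\_pi} after casting.

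The second basic fact is $\int_{-\pi}^{\pi}\sin(kx)\,dx = 0$ for every integer $k$. This holds either by oddness, or via the antiderivative $-\cos(kx)/k$ together with $\cos(k\pi)=\cos(-k\pi)$.

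With these facts the three identities follow. Since $n,m\in\mathbb{N}^+$, we have $n+m\ge 2$, so the $(n+m)$-terms always integrate to $0$. The $(n-m)$-term gives $0$ when $n\neq m$ and gives $\tfrac12\cdot 2\pi=\pi$ when $n=m$. This yields (\ref{eq:cos-cos}) and (\ref{eq:sin-sin}). For (\ref{eq:cos-sin}), both terms are sine integrals, which vanish for every $n,m$; in particular the case $n=m$ needs no separate treatment, since $\sin(0\cdot x)=0$.

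The main obstacle is formal bookkeeping rather than mathematics. The difference $n-m$ must be treated as an integer or real, not as a truncated natural number subtraction, so we work with $(n:\mathbb{R})-(m:\mathbb{R})$ throughout. The nonvanishing of this real in the case $n\neq m$ needs an explicit injectivity-of-cast argument, and we will need \texttt{integral\_cos}/\texttt{integral\_sin}-style lemmas with a linear change of variables (\texttt{intervalIntegral.integral\_comp\_mul\_left}). We would first prove a single auxiliary lemma, $\int_{-\pi}^{\pi}\cos(cx)\,dx=0$ and $\int_{-\pi}^{\pi}\sin(cx)\,dx=0$ for any real $c=k\in\mathbb{Z}\setminus\{0\}$, and derive all cases from it by \texttt{split\_ifs} on $n=m$.
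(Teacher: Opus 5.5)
Your proposal is correct and follows the paper's route: reduce each integral with the product-to-sum formulas, then use that $\int_{-\pi}^{\pi}\cos(kx)\,dx=\int_{-\pi}^{\pi}\sin(kx)\,dx=0$ for nonzero integer $k$, which comes from the antiderivative and $\sin(k\pi)=0$. The only cosmetic difference is in the diagonal cases $n=m$: the paper proves them as separate lemmas via the half-angle formula $\cos^2(nx)=\tfrac12(1+\cos(2nx))$, while you absorb them into the same product-to-sum identity through the $k=0$ term (integral $2\pi$) and a case split on $n=m$.
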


These are proved by product-to-sum formulas and the fact that
$\int_{-\pi}^\pi \cos(kx)\,dx = \int_{-\pi}^\pi \sin(kx)\,dx = 0$ for every positive
integer $k$.  We prove the latter via:
\begin{equation}
	k \int_{-\pi}^\pi \cos(kx)\,dx = \bigl[\sin(kx)\bigr]_{-\pi}^\pi
	= \sin(k\pi) - \sin(-k\pi) = 0,
\end{equation}
using the fact that $\sin(k\pi) = 0$ for integer $k$ (\texttt{sin\_nat\_mul\_pi}).

In Lean, each of the five cases is a separate lemma.  Here is the proof of the off-diagonal
cosine orthogonality as an example.

\begin{lstlisting}
lemma integration_cos_cos_zero (n m : Nat+) (h : n != m) :
    integral x in (-Real.pi)..Real.pi, cos (n * x) * cos (m * x) = 0 := by
  -- Product-to-sum: cos(nx)cos(mx) = 1/2cos((n-m)x) + 1/2cos((n+m)x)
  have prod_sum : forall x : Real,
      cos ((n : Real) * x) * cos ((m : Real) * x) =
      (1/2) * cos (((n : Real) - m) * x) + (1/2) * cos (((n : Real) + m) * x) := fun x => by
    have h1 := cos_sub ((n : Real) * x) ((m : Real) * x)
    have h2 := cos_add ((n : Real) * x) ((m : Real) * x)
    simp only [<- sub_mul, <- add_mul] at h1 h2; linarith
  simp_rw [prod_sum]
  -- Both integralcos((n-m)x) and integralcos((n+m)x) vanish
  rw [intervalIntegral.integral_add (by fun_prop) (by fun_prop),
      intervalIntegral.integral_const_mul, intervalIntegral.integral_const_mul,
      int_nm, int_np]; simp
\end{lstlisting}

The self-orthogonality lemmas (\texttt{integration\_cos\_cos\_pi} and
\texttt{integration\_sin\_sin\_pi}) use the half-angle formula $\cos^2(nx) = \tfrac{1}{2}(1 +
	\cos(2nx))$ and the fact that $\int_{-\pi}^\pi \cos(2nx)\,dx = 0$.

\subsection{Uniform Convergence: The Weierstrass M-Test}

For the Fourier series to have useful analytic properties (continuity, integrability,differentiability), we need it to converge, and convergence needs to be uniform for these properties to pass to the limit.  Our approach is to use the Weierstrass M-test.

\begin{lemma}[Term Bound]\label{lem:term-bound}
	For all $n \in \mathbb{N}$ and $x \in \mathbb{R}$,
	\[
		\|a_{n+1}\cos((n{+}1)x) + b_{n+1}\sin((n{+}1)x)\| \leq \|a_{n+1}\| + \|b_{n+1}\|.
	\]
\end{lemma}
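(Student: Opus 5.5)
The plan is to apply the triangle inequality and then bound each trigonometric factor by one. Write $t = (n+1)x$. First, the triangle inequality for the norm on $\mathbb{R}$ gives
\[
	\|a_{n+1}\cos t + b_{n+1}\sin t\| \leq \|a_{n+1}\cos t\| + \|b_{n+1}\sin t\|.
\]
In Mathlib this is \texttt{norm\_add\_le}. Next, multiplicativity of the norm on $\mathbb{R}$ (\texttt{norm\_mul}) rewrites each summand as a product: $\|a_{n+1}\|\,\|\cos t\|$ and $\|b_{n+1}\|\,\|\sin t\|$.

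Each trigonometric factor is at most one. The norm on $\mathbb{R}$ is the absolute value, and $|\cos t| \leq 1$ and $|\sin t| \leq 1$ hold for every real $t$ (\texttt{Real.abs\_cos\_le\_one}, \texttt{Real.abs\_sin\_le\_one}). Since $\|a_{n+1}\| \geq 0$, multiplying gives $\|a_{n+1}\|\,\|\cos t\| \leq \|a_{n+1}\|$ (\texttt{mul\_le\_of\_le\_one\_right}), and likewise for the sine term. Adding the two bounds (\texttt{add\_le\_add}) and chaining with the triangle inequality gives the claim.

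No step here is a real obstacle. The only friction I expect is bookkeeping in the formal version. The cast $((n+1 : \mathbb{N}) : \mathbb{R})$ has to match the form $(n+1 : \mathbb{R})$ used in \texttt{fourierPartialSum}, which \texttt{push\_cast} should handle. The real norm also has to be unfolded to absolute value via \texttt{Real.norm\_eq\_abs}. If assembling the chain by hand is awkward, I would first try a \texttt{calc} block. Alternatively, after the rewrites, \texttt{gcongr} combined with \texttt{nlinarith} using the trigonometric bounds and \texttt{abs\_nonneg} should close the goal.
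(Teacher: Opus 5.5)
Your proposal is correct and follows the paper's own argument: the triangle inequality, rewriting each summand as $\|a_{n+1}\|\,|\cos((n{+}1)x)|$ and $\|b_{n+1}\|\,|\sin((n{+}1)x)|$, then bounding each trigonometric factor by $1$. The Lean bookkeeping you anticipate is ancillary and does not change the proof.
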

\begin{proof}
	By the triangle inequality and $|\cos(nx)| \leq 1$, $|\sin(nx)| \leq 1$:
	\[
		\|a_{n+1}\cos((n{+}1)x)\| + \|b_{n+1}\sin((n{+}1)x)\|
		= \|a_{n+1}\|\cdot|\cos| + \|b_{n+1}\|\cdot|\sin|
		\leq \|a_{n+1}\| + \|b_{n+1}\|.  \qedhere
	\]
\end{proof}

\begin{lemma}[Uniform Convergence of Fourier Series]\label{lem:uniform-conv}
	If $\sum_{n=0}^\infty (\|a_n\| + \|b_n\|) < \infty$, then the partial sums
	$S_N(x)$ converge uniformly to $\texttt{fourierSeries}(a,b)(x)$.
\end{lemma}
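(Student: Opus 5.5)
The plan is to reduce the statement to the Weierstrass M-test, available in Mathlib as \texttt{tendstoUniformly\_tsum\_nat}. For $n \in \mathbb{N}$ and $x \in \mathbb{R}$, let $u_n(x) = a_{n+1}\cos((n{+}1)x) + b_{n+1}\sin((n{+}1)x)$. Then, by the definition of \texttt{fourierPartialSum},
\[
S_N(x) = \tfrac12 a_0 + \sum_{n<N} u_n(x).
\]
Lemma~\ref{lem:term-bound} gives the uniform bound $\|u_n(x)\| \le M_n := \|a_{n+1}\| + \|b_{n+1}\|$, independent of $x$.

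\textbf{Summability of the majorant.} We need $\sum_n M_n$ to be summable. The hypothesis gives summability of $n \mapsto \|a_n\| + \|b_n\|$ over $\mathbb{N}$. Shifting the index by one (\texttt{summable\_nat\_add\_iff}) gives summability of $n \mapsto \|a_{n+1}\| + \|b_{n+1}\|$. The M-test then yields
\[
\sum_{n<N} u_n \rightrightarrows \sum_{n=0}^\infty u_n
\]
uniformly on $\mathbb{R}$ as $N \to \infty$. Adding the constant $\tfrac12 a_0$ preserves uniform convergence (\texttt{TendstoUniformly.add} with a constant, or directly from the definition since the difference is unchanged).

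\textbf{Matching the limit.} It remains to identify $\tfrac12 a_0 + \sum_{n\in\mathbb{N}} u_n(x)$ with \texttt{fourierSeries}$(a,b)(x)$. The latter is defined as a \texttt{tsum} over $\mathbb{N}^+$ of $a_n\cos(nx)+b_n\sin(nx)$. This reindexing step is where I expect the main (formal) obstacle: we must convert a sum over $\mathbb{N}^+$ into a sum over $\mathbb{N}$ shifted by one. I would use the equivalence $\mathbb{N} \simeq \mathbb{N}^+$, $n \mapsto n+1$ (\texttt{Equiv.pnatEquivNat} or \texttt{tsum\_pnat\_eq\_tsum\_succ}) together with \texttt{Equiv.tsum\_eq}. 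Then, pointwise, the cast $((n+1 : \mathbb{N}^+) : \mathbb{R}) = (n : \mathbb{R}) + 1$ must match the expression $(n+1 : \mathbb{R})$ in the partial sum; this needs a \texttt{push\_cast}/\texttt{simp} step. Since \texttt{tsum} is defined even for non-summable families, no summability bookkeeping is needed for the rewriting itself, although summability is already available from the M-test bound.

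Finally, the target is stated with \texttt{fourierSeries} as the limit function, so we rewrite the limit obtained from the M-test via this pointwise equality (\texttt{TendstoUniformly.congr\_right} or a \texttt{funext} rewrite) to conclude.
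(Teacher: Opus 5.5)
Your proposal is correct and follows essentially the same route as the paper: the Weierstrass M-test (\texttt{tendstoUniformly\_tsum\_nat}) with majorant $M_n = \|a_{n+1}\| + \|b_{n+1}\|$ from Lemma~\ref{lem:term-bound}, followed by reindexing the $\mathbb{N}^+$-sum to an $\mathbb{N}$-sum via \texttt{Equiv.pnatEquivNat} and fixing the cast with \texttt{Nat.cast\_succ}. The differences are cosmetic. You shift summability with \texttt{summable\_nat\_add\_iff}, where the paper goes through the $\mathbb{N}^+$ summability lemmas. You absorb the constant $\tfrac12 a_0$ via \texttt{TendstoUniformly.add}/\texttt{congr\_right}, where the paper works in metric form with \texttt{dist\_add\_left}.
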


\begin{proof}
	This is an immediate application of the Weierstrass M-test
	(\texttt{tendstoUniformly\_tsum\_nat}) with $M_n = \|a_{n+1}\| + \|b_{n+1}\|$, using
	Lemma~\ref{lem:term-bound}.  In Lean:
\end{proof}

\begin{lstlisting}
lemma fourierSeries_uniformlyConvergence
    (hsumab : Summable (fun n => ||a n|| + ||b n||)) :
    TendstoUniformly
      (fun N x => fourierPartialSum a b x N)
      (fourierSeries a b)
      Filter.atTop := by
  have hshift : Summable (fun n => ||a (n + 1)|| + ||b (n + 1)||) :=
    summable_pnat_iff_summable_succ.mp (summable_pnat_iff_summable_nat.mpr hsumab)
  have hcore := tendstoUniformly_tsum_nat hshift (term_bound a b)
  rw [Metric.tendstoUniformly_iff]
  intro eps heps
  filter_upwards [(Metric.tendstoUniformly_iff.mp hcore) eps heps] with N hN
  intro x
  -- Reindex Nat+ to Nat and reduce to the core convergence
  have heq : fourierSeries a b x =
      1 / 2 * a 0 + sum' n : Nat, (a (n+1) * cos ((n+1)*x) + b (n+1) * sin ((n+1)*x)) := by
    simp only [fourierSeries]; congr 1
    rw [<- Equiv.tsum_eq Equiv.pnatEquivNat.symm]; congr 1; ext n
    simp [Nat.succPNat, Nat.cast_succ]
  rw [heq, dist_add_left]; exact hN x
\end{lstlisting}

\subsection{Continuity and Integrability}

\begin{lemma}\label{lem:continuous}
	If $\sum_{n=0}^\infty (\|a_n\| + \|b_n\|) < \infty$, then $\texttt{fourierSeries}(a,b)$
	is continuous on $\mathbb{R}$.
\end{lemma}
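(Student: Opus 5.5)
The plan is to derive continuity from the uniform convergence established in Lemma~\ref{lem:uniform-conv}, using the standard fact that a uniform limit of continuous functions is continuous. In Mathlib this is \texttt{TendstoUniformly.continuous}. It needs two inputs: the uniform convergence statement, and, eventually in $N$ (in fact for every $N$), continuity of each partial sum $x \mapsto S_N(x)$.

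The steps are as follows.
\begin{enumerate}[label=(\roman*)]
\item Take the hypothesis $\sum_n (\|a_n\|+\|b_n\|)<\infty$ and apply Lemma~\ref{lem:uniform-conv}. This gives \texttt{TendstoUniformly (fun N x => fourierPartialSum a b x N) (fourierSeries a b) atTop}.
\item Show that for each fixed $N$ the partial sum $S_N(x) = \tfrac12 a_0 + \sum_{n<N}\bigl(a_{n+1}\cos((n{+}1)x) + b_{n+1}\sin((n{+}1)x)\bigr)$ is continuous. It is a constant plus a finite sum of products of constants with $\cos$ and $\sin$ composed with linear maps. Unfolding \texttt{fourierPartialSum} and calling \texttt{fun\_prop} (or \texttt{continuous\_finset\_sum} together with \texttt{Continuous.add}, \texttt{Continuous.mul}, \texttt{continuous\_cos.comp}) should discharge this.
\item Package step (ii) as \texttt{Filter.Eventually.of\_forall} and feed both facts to \texttt{TendstoUniformly.continuous}.
\end{enumerate}

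The main obstacle is bookkeeping rather than mathematics. Lemma~\ref{lem:uniform-conv} phrases the family as \texttt{fun N x => fourierPartialSum a b x N}, with argument order $(x,N)$ swapped relative to the curried family. The continuity lemma must therefore be stated for \texttt{fun x => fourierPartialSum a b x N}, and I may need a \texttt{show} or \texttt{beta\_reduce} so the shapes unify. If \texttt{fun\_prop} stumbles on the \texttt{Finset.range} sum, I will fall back to \texttt{continuous\_finset\_sum} applied term-by-term. An alternative route avoids partial sums altogether: apply the M-test-based continuity lemma \texttt{continuous\_tsum} directly to the $\mathbb{N}^+$-indexed series, with bound $\|a_n\|+\|b_n\|$ from Lemma~\ref{lem:term-bound}, and add the constant $\tfrac12 a_0$. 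I would keep this as a backup, since it requires restating the summability over $\mathbb{N}^+$ via \texttt{summable\_pnat\_iff\_summable\_nat}.
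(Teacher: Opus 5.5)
Your proposal is correct and matches the paper's proof: apply Lemma~\ref{lem:uniform-conv} to get uniform convergence, show each partial sum is continuous via \texttt{continuous\_finset\_sum} and continuity of the trigonometric terms, and conclude with \texttt{TendstoUniformly.continuous}. One small Lean detail: that lemma takes a \emph{frequently} hypothesis, so your \texttt{Eventually.of\_forall} fact needs a \texttt{.frequently} conversion, which is exactly what the paper does via \texttt{Filter.Eventually.frequently}.
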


\begin{proof}
	Each partial sum $S_N$ is continuous (it is a finite sum of continuous functions).  By	Lemma~\ref{lem:uniform-conv}, $S_N \rightrightarrows f$ uniformly, and the uniform limit	of continuous functions is continuous (\texttt{TendstoUniformly.continuous}).  In Lean,	the continuity of each partial sum is established via \texttt{continuous\_finset\_sum}	and the continuity of individual trigonometric terms.
\end{proof}

\begin{lstlisting}
lemma fourierSeries_continuous
    (hsumab : Summable (fun n => ||a n|| + ||b n||)) :
    Continuous (fourierSeries a b) := by
  apply (fourierSeries_uniformlyConvergence a b hsumab).continuous
  apply Filter.Eventually.frequently
  apply Filter.eventually_atTop.mpr
  refine <0, fun N _ => ?_>
  unfold fourierPartialSum
  apply Continuous.add continuous_const
  apply continuous_finset_sum
  intro n _
  exact (continuous_const.mul
    (continuous_cos.comp (continuous_const.mul continuous_id))).add
        (continuous_const.mul
    (continuous_sin.comp (continuous_const.mul continuous_id)))
\end{lstlisting}

Integrability on $[-\pi, \pi]$ follows immediately from continuity:

\begin{lstlisting}
lemma fourierSeries_integrable
    (hsumab : Summable (fun n => ||a n|| + ||b n||)) :
    IntervalIntegrable (fourierSeries a b) MeasureTheory.volume (-Real.pi) Real.pi :=
  (fourierSeries_continuous a b hsumab).continuousOn
  |>.intervalIntegrable_of_Icc (by linarith [Real.pi_pos])
\end{lstlisting}

\subsection{Term-by-Term Differentiation}

For Wirtinger's inequality we also need to differentiate the Fourier series term by term. This requires stricter conditions than for continuity.

\begin{lemma}[Term-wise Differentiation]\label{lem:diff-term}
	For each $n \in \mathbb{N}$ and $x \in \mathbb{R}$:
	\[
		\frac{d}{dx}\bigl[a_n\cos(nx) + b_n\sin(nx)\bigr] = -na_n\sin(nx) + nb_n\cos(nx).
	\]
\end{lemma}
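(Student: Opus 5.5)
The plan is to compute the derivative with the chain rule and linearity of differentiation, treating $n$ as a fixed real constant. First we differentiate the inner linear map: $x \mapsto nx$ has derivative $n$ at every $x$. In Lean this is \texttt{(hasDerivAt\_id x).const\_mul (n : Real)}, which yields derivative $n \cdot 1$. We then apply \texttt{HasDerivAt.cos} and \texttt{HasDerivAt.sin} to this inner derivative. This gives
\[
\frac{d}{dx}\cos(nx) = -\sin(nx)\cdot n, \qquad \frac{d}{dx}\sin(nx) = \cos(nx)\cdot n .
\]

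Next we scale by the constant coefficients using \texttt{HasDerivAt.const\_mul}. This produces derivatives $a_n\cdot(-\sin(nx)\cdot n)$ and $b_n\cdot(\cos(nx)\cdot n)$. Combining the two terms with \texttt{HasDerivAt.add} gives a \texttt{HasDerivAt} statement whose derivative expression is
\[
a_n\cdot(-\sin(nx)\cdot n\cdot 1) + b_n\cdot(\cos(nx)\cdot n\cdot 1).
\]

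Finally we rewrite this to the stated form $-na_n\sin(nx) + nb_n\cos(nx)$. We do this with \texttt{HasDerivAt.congr\_deriv}, or by \texttt{convert} followed by \texttt{ring}. The statement is then either the \texttt{HasDerivAt} fact itself or, if phrased with \texttt{deriv}, its consequence via \texttt{HasDerivAt.deriv}.

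There is no real mathematical obstacle. The only friction I expect is bookkeeping. The leftover factors of $1$ and the order of multiplication from the chain rule lemmas need normalising, which \texttt{ring} or \texttt{simp; ring} handles. We must also make sure the cast $(n:\mathbb{R})$ is written identically in the function and in its derivative, so that the lemma matches the summands of \texttt{fourierDeriv} when it is later used with \texttt{hasDerivAt\_of\_tendstoUniformly}.
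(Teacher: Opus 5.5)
Your proposal is correct and follows the paper's proof essentially step for step. The paper also builds \texttt{((hasDerivAt\_id x).const\_mul (n : Real)).cos} (resp.\ \texttt{.sin}), clears the \texttt{id} and the factor $1$ with \texttt{simp}, scales by $a_n$ (resp.\ $b_n$) via \texttt{const\_mul}, normalises with \texttt{congr\_deriv (by ring)}, and combines with \texttt{HasDerivAt.add}; the only cosmetic difference is that it normalises each summand before adding rather than once at the end.
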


\begin{lstlisting}
lemma hasDerivAt_term (n : Nat) (x : Real) :
    HasDerivAt (fun x => a n * cos (n * x) + b n * sin (n * x))
      (-(n : Real) * a n * sin (n * x) + n * b n * cos (n * x)) x := by
  have h1 : HasDerivAt (fun x => a n * cos (n * x))
                       (-(n : Real) * a n * sin (n * x)) x := by
    have := ((hasDerivAt_id x).const_mul (n : Real)).cos
    simp at this; exact this.const_mul (a n) |>.congr_deriv (by ring)
  have h2 : HasDerivAt (fun x => b n * sin (n * x))
                       (n * b n * cos (n * x)) x := by
    have := ((hasDerivAt_id x).const_mul (n : Real)).sin
    simp at this; exact this.const_mul (b n) |>.congr_deriv (by ring)
  exact h1.add h2
\end{lstlisting}

\begin{lemma}[Uniform Convergence of Derivative Series]\label{lem:deriv-unif}	
  If $\sum_{n=0}^\infty n(\|a_n\| + \|b_n\|) < \infty$, then the partial sums of the	formal derivative series converge uniformly to $\texttt{fourierDeriv}(a,b)$.
\end{lemma}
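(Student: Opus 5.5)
The plan is to copy the proof of Lemma~\ref{lem:uniform-conv}, replacing the term bound by a derivative term bound and applying the Weierstrass M-test (\texttt{tendstoUniformly\_tsum\_nat}). First I would establish the analogue of Lemma~\ref{lem:term-bound}: for all $n\in\mathbb{N}$ and $x\in\mathbb{R}$,
\[
\bigl\|-na_n\sin(nx) + nb_n\cos(nx)\bigr\| \le n\bigl(\|a_n\|+\|b_n\|\bigr).
\]
This follows from the triangle inequality, $|\sin|,|\cos|\le 1$, and $|n| = n$ for the cast of a natural number. In Lean the bound would come from \texttt{norm\_add\_le}, \texttt{norm\_mul}, \texttt{abs\_sin\_le\_one} and \texttt{abs\_cos\_le\_one}, finished with \texttt{gcongr} or \texttt{nlinarith}.

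Since \texttt{fourierDeriv} is indexed by $\mathbb{N}$ (its $n=0$ term vanishes), no reindexing between $\mathbb{N}^+$ and $\mathbb{N}$ is needed here, unlike in Lemma~\ref{lem:uniform-conv}. I would take the partial sums to be $D_N(x)=\sum_{n<N}\bigl(-na_n\sin(nx)+nb_n\cos(nx)\bigr)$ over \texttt{Finset.range N}. With $M_n = n(\|a_n\|+\|b_n\|)$, which is summable by hypothesis, \texttt{tendstoUniformly\_tsum\_nat} gives directly
\[
\Bigl(N \mapsto D_N\Bigr) \rightrightarrows \texttt{fourierDeriv}(a,b) \quad \text{as } N \to \infty.
\]
The summability of each pointwise series, needed to make the \texttt{tsum} meaningful, follows from the same bound via \texttt{Summable.of\_norm\_bounded}, but the M-test lemma packages this already.

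I expect the main obstacle to be bookkeeping rather than analysis. One issue is coercion: the expression $-(n:\mathbb{R})\,a_n$ versus $-(n\cdot a_n)$ must match the exact form written in \texttt{fourierDeriv}, and norms of casts need \texttt{Nat.abs\_cast}. The other issue arises if the intended partial sums are shifted like \texttt{fourierPartialSum}, i.e.\ indexed by $n+1$. In that case I would either prove the statement with unshifted partial sums and note that they agree, or mimic the \texttt{hshift} step: pass to $M_{n+1}$ via \texttt{summable\_nat\_add\_iff} and rewrite the \texttt{tsum} using \texttt{tsum\_eq\_zero\_add}, discharging the vanishing $n=0$ term by \texttt{simp}.
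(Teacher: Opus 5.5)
Your proposal is correct and matches the paper's argument. Like the paper, you apply the Weierstrass M-test (\texttt{tendstoUniformly\_tsum\_nat}) with $M_n = n(\|a_n\|+\|b_n\|)$ and the termwise bound $\|-na_n\sin(nx)+nb_n\cos(nx)\|\le n(\|a_n\|+\|b_n\|)$. Your unshifted partial sums over \texttt{Finset.range N} are also exactly what the paper uses, with the $\mathbb{N}^+$-to-$\mathbb{N}$ shift handled later in the proof of Theorem~\ref{thm:fourier-deriv}.
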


The proof is another application of the Weierstrass M-test with $M_n = n(\|a_n\| + \|b_n\|)$,
using the bound $\|-na_n\sin(nx) + nb_n\cos(nx)\| \leq n(\|a_n\| + \|b_n\|)$.

\begin{theorem}[Term-by-Term Differentiability]\label{thm:fourier-deriv}
	If both $\sum_{n=0}^\infty (\|a_n\| + \|b_n\|)$ and $\sum_{n=0}^\infty n(\|a_n\| + \|b_n\|)$
	converge, then:
	\[
		\frac{d}{dx}\,\texttt{fourierSeries}(a,b)(x) = \texttt{fourierDeriv}(a,b)(x)
		= \sum_{n=1}^\infty \bigl(-na_n\sin(nx) + nb_n\cos(nx)\bigr).
	\]
\end{theorem}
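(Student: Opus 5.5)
The plan is to apply the standard theorem on differentiating a uniformly convergent sequence of functions (\texttt{hasDerivAt\_of\_tendstoUniformly}) to the partial sums $S_N(x)$ of \texttt{fourierPartialSum}. That theorem requires three ingredients: (i) each $S_N$ has a derivative $S_N'$ at every point; (ii) the derivatives $S_N'$ converge uniformly to some limit $g$; and (iii) $S_N(x) \to \texttt{fourierSeries}(a,b)(x)$ pointwise. Its conclusion is that $\texttt{fourierSeries}(a,b)$ has derivative $g(x)$ at every $x$.

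For (i), I will differentiate the finite sum with \texttt{HasDerivAt.sum} together with Lemma~\ref{lem:diff-term} applied at index $n+1$. The constant term $\tfrac12 a_0$ contributes zero via \texttt{hasDerivAt\_const}. This gives
\[
S_N'(x)=\sum_{n<N}\bigl(-(n{+}1)a_{n+1}\sin((n{+}1)x)+(n{+}1)b_{n+1}\cos((n{+}1)x)\bigr).
\]
For (iii), pointwise convergence follows at once from Lemma~\ref{lem:uniform-conv}, using the first summability hypothesis, via \texttt{TendstoUniformly.tendsto\_at}.

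For (ii), I will use Lemma~\ref{lem:deriv-unif} with the second summability hypothesis. The main obstacle I expect is reconciling indexing conventions. Lemma~\ref{lem:deriv-unif} is naturally phrased for partial sums $\sum_{n<N}$ over $\mathbb{N}$ starting at $n=0$, while $S_N'$ sums over indices $n+1$ for $n<N$. The target is also inconsistent: \texttt{fourierDeriv} is a \texttt{tsum} over $\mathbb{N}$, but the statement writes it as a sum over $n\ge 1$.

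I would resolve this in two steps. First, the $n=0$ term of the derivative series is $-0\cdot a_0\sin 0+0\cdot b_0\cos 0=0$, so \texttt{tsum\_eq\_zero\_add} (or \texttt{Summable.tsum\_eq\_zero\_add}) rewrites $\texttt{fourierDeriv}(a,b)(x)$ as $\sum_{n}$ of the shifted terms. That matches exactly the limit of $S_N'$, and also identifies it with the $\mathbb{N}^+$-indexed sum through \texttt{Equiv.pnatEquivNat}, as in the proof of Lemma~\ref{lem:uniform-conv}. Second, since the $\mathbb{N}$-indexed partial sum $\sum_{n<N+1}$ equals $\sum_{n<N}$ of the shifted terms (by \texttt{Finset.sum\_range\_succ'} and the vanishing zeroth term), uniform convergence of the derivative partial sums transfers to $S_N'$ through the shift $N\mapsto N+1$. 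That shift preserves \texttt{atTop}, by \texttt{tendsto\_add\_atTop\_nat} composed with the uniform convergence. Alternatively, I could apply the M-test directly with $M_n=(n{+}1)(\|a_{n+1}\|+\|b_{n+1}\|)$, whose summability comes from \texttt{summable\_nat\_add\_iff}, exactly mirroring the proof of Lemma~\ref{lem:uniform-conv}.

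With (i)--(iii) in place, \texttt{hasDerivAt\_of\_tendstoUniformly} yields $\texttt{HasDerivAt}\,(\texttt{fourierSeries}\,a\,b)\,(\texttt{fourierDeriv}\,a\,b\,x)\,x$. Then \texttt{HasDerivAt.deriv} gives the displayed equality, and the reindexing identity above gives the second equality with the sum starting at $n=1$.
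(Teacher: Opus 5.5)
Your proposal is correct and matches the paper's proof. Both apply \texttt{hasDerivAt\_of\_tendstoUniformly} with three inputs: differentiable partial sums, pointwise convergence from Lemma~\ref{lem:uniform-conv}, and uniform convergence of the derivative partial sums from Lemma~\ref{lem:deriv-unif}. That last input uses the shift $\sum_{n<N} D(n{+}1,x)=\sum_{n<N+1} D(n,x)$ (\texttt{Finset.sum\_range\_succ'}) and the reindexing $N\mapsto N+1$ along \texttt{atTop}. Your extra step, rewriting \texttt{fourierDeriv} as a sum from $n=1$ via the vanishing zeroth term, is not needed for the derivative itself, but it does supply the second displayed equality, which the paper leaves implicit.
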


\begin{proof}
	This follows from the general theorem \texttt{hasDerivAt\_of\_tendstoUniformly}: if	(i) the partial sums $\{f_N\}$ of the series satisfy $f_N(x_0) \to f(x_0)$ for some	$x_0$, (ii) the partial sums of the derivative series converge uniformly, and (iii) each	$f_N$ is differentiable, then the limit $f$ is differentiable with derivative equal to	the limit of the derivative partial sums.

	In Lean, the main subtlety is that the derivative series is indexed starting from $n = 0$	(where the $n=0$ term vanishes), while the Fourier series is indexed over $\mathbb{N}^+$.	We handle this via a shift: $\sum_{n \in \text{range}~N} D(n+1, x) = \sum_{n \in \text{range}~(N+1)} D(n, x)$, using \texttt{Finset.sum\_range\_succ'}.
\end{proof}

\begin{lstlisting}
lemma fourierSeries_hasDerivAt
    (hab' : Summable (fun n : Nat => (n : Real) * (||a n|| + ||b n||)))
    (hab : Summable (fun n => ||a n|| + ||b n||)) (x : Real) :
    HasDerivAt (fourierSeries a b) (fourierDeriv a b x) x := by
  have hderiv_unif : TendstoUniformly
      (fun N x => sum n in Finset.range N,
        (-((n+1) : Real) * a (n+1) * sin ((n+1) * x)
          + (n+1) * b (n+1) * cos ((n+1) * x)))
      (fourierDeriv a b) Filter.atTop := by
    have key : forall N x,
        sum n in Finset.range N, (-((n+1):Real)*a(n+1)*sin((n+1)*x)+(n+1)*b(n+1)*cos((n+1)*x)) =
        sum n in Finset.range (N+1), (-(n:Real)*a n*sin(n*x)+n*b n*cos(n*x)) :=
      fun N x => by symm; rw [Finset.sum_range_succ']; simp
    simp_rw [key]
    exact fun u hu =>
      (Filter.tendsto_atTop_atTop.mpr fun M => <M-1, fun N hN => by omega>).eventually
        (fourierDeriv_uniformConvergence a b hab' u hu)
  exact hasDerivAt_of_tendstoUniformly hderiv_unif
    (Filter.Eventually.of_forall (hasDerivAt_partialSum a b))
    (fun y => (fourierSeries_uniformlyConvergence a b hab).tendsto_at y) x
\end{lstlisting}

\section{Parseval's Theorem}
\label{sec:parseval}

\subsection{Statement and History}

Parseval's theorem, named after Marc-Antoine Parseval who stated a version of it in 1799,
asserts that the $L^2$ norm of a function equals the $\ell^2$ norm of its Fourier
coefficients.  In the classical setting of real Fourier series on $[-\pi, \pi]$:

\begin{theorem}[Parseval's Theorem]\label{thm:parseval}
	Let $f(x) = \tfrac{1}{2}a_0 + \sum_{n=1}^\infty(a_n\cos(nx) + b_n\sin(nx))$ be a Fourier
	series satisfying appropriate integrability and summability conditions.  Then:
	\begin{equation}\label{eq:parseval}
		\frac{1}{\pi}\int_{-\pi}^\pi [f(x)]^2\,dx
		= \frac{1}{2}a_0^2 + \sum_{n=1}^\infty(a_n^2 + b_n^2).
	\end{equation}
\end{theorem}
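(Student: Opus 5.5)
We work in the setting fixed by the Remark in Section~\ref{sec:fourier}. The coefficients $a,b:\mathbb{N}\to\mathbb{R}$ are arbitrary sequences with $\sum_{n}(\|a_n\|+\|b_n\|)<\infty$, and $f=\texttt{fourierSeries}(a,b)$. Under this hypothesis, Lemma~\ref{lem:uniform-conv} gives uniform convergence $S_N\rightrightarrows f$. Lemma~\ref{lem:continuous} and its integrability corollary make $f$, and hence $f^2$, continuous and integrable on $[-\pi,\pi]$. The plan is to expand one factor of $f^2$ as a series and integrate term by term against the other factor. This avoids a double series altogether.

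\textbf{Step 1.} Write $f(x)^2 = \tfrac12 a_0 f(x) + \sum_{n\ge1} f(x)\bigl(a_n\cos(nx)+b_n\sin(nx)\bigr)$. This is valid pointwise because $f(x)$ is a constant multiple of a convergent series (\texttt{tsum\_mul\_left}).

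\textbf{Step 2.} Interchange the sum and the integral using \texttt{MeasureTheory.integral\_tsum\_of\_summable\_norm} or \texttt{intervalIntegral.tsum}-style lemmas. The required domination is as follows. Since $f$ is continuous on a compact interval, $|f|\le M$ there. The $n$-th term then has sup-norm at most $M(\|a_n\|+\|b_n\|)$, which is summable. So the series of $L^1$ norms is bounded by $2\pi M\sum(\|a_n\|+\|b_n\|)<\infty$.

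\textbf{Step 3.} Compute the coefficient integrals $\int_{-\pi}^{\pi} f(x)\cos(nx)\,dx=\pi a_n$ and $\int_{-\pi}^{\pi} f(x)\sin(nx)\,dx=\pi b_n$ for $n\ge1$, together with $\int_{-\pi}^{\pi} f = \pi a_0$. For each of these, expand $f$ again as its series and interchange by the same M-test domination, now with the bounded factor $\cos(nx)$ or $\sin(nx)$. Then apply Lemma~\ref{lem:orthogonality} termwise. All off-diagonal terms vanish, the diagonal term contributes $\pi$, and the constant term contributes $\int\cos(nx)=0$ (respectively $2\pi\cdot\tfrac12 a_0$ for the constant integral).

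Combining, $\int f^2 = \tfrac12 a_0\cdot \pi a_0 + \sum_{n\ge1}(a_n\cdot\pi a_n + b_n\cdot \pi b_n)$. Dividing by $\pi$ yields \eqref{eq:parseval}.

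The main obstacle is the bookkeeping in Steps 2--3 in Lean. First, the series is indexed by $\mathbb{N}^+$, while the M-test and integral--sum lemmas are stated over $\mathbb{N}$. We would reindex once, via \texttt{Equiv.pnatEquivNat}, as in the proof of Lemma~\ref{lem:uniform-conv}, and state the orthogonality lemmas in a form indexed by $n+1$. Second, the termwise collapse in Step 3 needs a sum over $m$ in which only $m=n$ survives. We would prove it by rewriting each term with an \texttt{if} expression and using \texttt{tsum\_ite\_eq}. To keep the interchange lemma uniform, I would first prove a single auxiliary statement: for any continuous $g$ on $[-\pi,\pi]$, $\int g f = \tfrac12 a_0\int g + \sum_n \int g\,(a_n\cos + b_n\sin)$. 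Steps 2 and 3 are then both instances of it, with $g=f$, $g=\cos(n\cdot)$, $g=\sin(n\cdot)$ and $g=1$.
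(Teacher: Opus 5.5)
Your argument is correct, but it takes a different route from the paper.

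\textbf{The paper's route.} It splits $f^2=\tfrac{1}{4}a_0^2+a_0S+S^2$ and integrates the constant term directly. It kills the cross term with a single sum--integral interchange (\texttt{integration\_of\_cos\_sin}, essentially your auxiliary statement with $g=1$). It then \emph{assumes} the core identity $\int_{-\pi}^{\pi}S^2\,dx=\pi\sum_{n\ge1}(a_n^2+b_n^2)$ as the hypothesis \texttt{h\_int\_sq}. Its informal justification of that identity goes through the double series $S^2=\sum_n\sum_m(\cdots)$ (\texttt{expand\_cos\_sq} and its siblings). That requires summability of product families on $\mathbb{N}^+\times\mathbb{N}^+$ and an interchange of the integral with an iterated sum before the Kronecker-delta collapse.

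\textbf{Your route.} Expanding only one factor replaces all of that with single-index interchanges dominated by $\|f\|_\infty(|a_n|+|b_n|)$. As a by-product it proves the coefficient-recovery formulas $\int_{-\pi}^{\pi}f(x)\cos(nx)\,dx=\pi a_n$ and $\int_{-\pi}^{\pi}f(x)\sin(nx)\,dx=\pi b_n$, which the paper never establishes. So you get a genuine proof rather than a reduction.

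\textbf{Consequence for the paper's hypotheses.} For $n\ge1$ one has $\int_{-\pi}^{\pi}|a\cos(nx)+b\sin(nx)|\,dx=4\sqrt{a^2+b^2}$. Hence the paper's hypothesis \texttt{hF\_sum} is already equivalent to your absolute-summability assumption. Your argument therefore shows that \texttt{h\_int\_sq} is redundant in \texttt{Parsevals\_thm}: subtract the constant and cross terms from your formula for $\int f^2$. The integrability hypotheses are also redundant, since they follow from continuity.

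\textbf{Trade-off.} The paper's decomposition buys a short, modular formal proof once the analytic core is granted. Yours costs an extra dependence on boundedness of $f$ (via \texttt{fourierSeries\_continuous}) and one \texttt{tsum\_ite\_eq} collapse per $n$, but leaves nothing assumed.
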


Parseval's theorem can be understood as an instance of the Plancherel theorem in $L^2$~\cite{stein2003, rudin1976}: the Fourier transform (and Fourier series expansion) is an isometry from $L^2$ to $\ell^2$.  In our setting, we do not work in $L^2$ directly but instead verify the identity under explicit `summability hypotheses, which is closer to the classical treatment.

\subsection{Proof Strategy}

The proof proceeds in three stages:
\begin{enumerate}
	\item Expand $[f(x)]^2$ algebraically into a constant term, cross terms, and double sums.
	\item Integrate term by term, using the dominated convergence theorem to justify the
	      interchange of sum and integral.
	\item Evaluate each resulting integral using the orthogonality relations
	      (Lemma~\ref{lem:orthogonality}).
\end{enumerate}

\subsection{Step 1: Algebraic Expansion}

Write $f(x) = \tfrac{1}{2}a_0 + S(x)$ where $S(x) = \texttt{fourierSum}(a,b)(x)$.  Then:
\begin{equation}\label{eq:sq-expand}
	[f(x)]^2 = \frac{a_0^2}{4} + a_0 S(x) + [S(x)]^2.
\end{equation}

\begin{lstlisting}
lemma fourierSeries_sq (x : Real) :
  (fourierSeries a b x)^2 = (a 0)^2 / 4 +
  a 0 * (sum' n : Nat+, (a n * cos (n * x) + b n * sin (n * x))) +
  (sum' n : Nat+, (a n * cos (n * x) + b n * sin (n * x)))^2 := by
  unfold fourierSeries; ring
\end{lstlisting}

The squared oscillatory sum is further expanded via the identity
$S^2 = (\sum a_n\cos)^2 + 2(\sum a_n\cos)(\sum b_n\sin) + (\sum b_n\sin)^2$:

\begin{lstlisting}
lemma expand_square (x : Real)
    (hc : Summable fun n : Nat+ => a n * cos (n * x))
    (hs : Summable fun n : Nat+ => b n * sin (n * x)) :
    (sum' n : Nat+, (a n * cos (n * x) + b n * sin (n * x)))^2 =
    (sum' n : Nat+, a n * cos (n * x))^2 +
    2 * (sum' n : Nat+, a n * cos (n * x)) * (sum' n : Nat+, b n * sin (n * x)) +
    (sum' n : Nat+, b n * sin (n * x))^2 := by
  rw [hc.tsum_add hs]; ring
\end{lstlisting}

Each squared sum is then expanded into a double tsum.  For instance:
\begin{lstlisting}
lemma expand_cos_sq (x : Real)
    (hs1 : Summable fun n : Nat+ => a n * cos (n * x))
    (hprod1 : Summable fun z : Nat+ x Nat+ =>
      (a z.1 * cos (z.1 * x)) * (a z.2 * cos (z.2 * x)))
    (hinner : forall n : Nat+, Summable fun m : Nat+ =>
      (a n * cos (n * x)) * (a m * cos (m * x))) :
    (sum' n : Nat+, a n * cos (n * x))^2 =
    sum' n : Nat+, sum' m : Nat+, a n * a m * cos (n * x) * cos (m * x) := by
  rw [sq, hs1.tsum_mul_tsum hs1 hprod1, hprod1.tsum_prod' hinner]
  congr 1; ext n; congr 1; ext m; ring
\end{lstlisting}

This uses \texttt{Summable.tsum\_mul\_tsum} to split a product of two sums into a sum overpairs, followed by \texttt{Summable.tsum\_prod'} to convert the pair-indexed sum to an iterated sum.

\subsection{Step 2: Integration}

Taking the integral of~\eqref{eq:sq-expand} over $[-\pi, \pi]$:
\begin{equation}\label{eq:integral-expansion}
	\int_{-\pi}^\pi [f(x)]^2\,dx
	= \frac{a_0^2\pi}{2}
	+ \int_{-\pi}^\pi a_0 S(x)\,dx
	+ \int_{-\pi}^\pi [S(x)]^2\,dx.
\end{equation}

\subsubsection{The cross term}

\begin{lemma}\label{lem:cross-zero}
	Under the summability condition $\sum_{n \geq 1}(\|a_n\| + \|b_n\|) < \infty$:
	\[
		\int_{-\pi}^\pi a_0 S(x)\,dx = 0.
	\]
\end{lemma}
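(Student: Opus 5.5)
The plan is to pull the constant $a_0$ out of the integral and show that $\int_{-\pi}^\pi S(x)\,dx = 0$ by exchanging the sum and the integral. By linearity (\texttt{intervalIntegral.integral\_const\_mul}),
\[
	\int_{-\pi}^\pi a_0 S(x)\,dx = a_0 \int_{-\pi}^\pi \sum_{n=1}^\infty \bigl(a_n\cos(nx) + b_n\sin(nx)\bigr)\,dx .
\]
So it suffices to prove that the inner integral vanishes.

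For the interchange I would invoke \texttt{MeasureTheory.integral\_tsum\_of\_summable\_norm}, the dominated-convergence form cited in Section~\ref{sec:lean4}. It is applied on the restricted measure $\mathrm{volume}|_{[-\pi,\pi]}$, after rewriting the interval integral as a set integral over $\mathrm{Ioc}(-\pi,\pi)$. The hypotheses to check are the following.
\begin{enumerate}
	\item Each term $F_n(x) = a_n\cos(nx) + b_n\sin(nx)$ is integrable on $[-\pi,\pi]$. This holds because $F_n$ is continuous on a compact interval, and \texttt{fun\_prop} together with \texttt{Continuous.integrableOn\_Icc} should handle it.
	\item The series of $L^1$ norms $\sum_n \|F_n\|_{L^1}$ is summable. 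By Lemma~\ref{lem:term-bound}, $\|F_n\|_{L^1} \le 2\pi(\|a_n\| + \|b_n\|)$, and this bound is summable over $\mathbb{N}^+$ by the hypothesis (restricting from $\mathbb{N}$ via \texttt{summable\_pnat\_iff\_summable\_nat}). Summability then follows by \texttt{Summable.of\_nonneg\_of\_le}.
\end{enumerate}
These give $\int S = \sum_{n\ge1}\int_{-\pi}^\pi F_n$. A fallback that avoids measure-theoretic norms is available: use the uniform convergence of Lemma~\ref{lem:uniform-conv} and pass the limit through the integral of the partial sums, using \texttt{intervalIntegral.tendsto\_integral\_filter\_of\_dominated\_convergence}.

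Next, each term integral vanishes. We have $\int_{-\pi}^\pi F_n = a_n\int\cos(nx)\,dx + b_n\int\sin(nx)\,dx$, and both integrals are zero for $n \ge 1$. These are exactly the auxiliary facts used in the proof of Lemma~\ref{lem:orthogonality}: for cosine, $n\int\cos(nx)\,dx = \sin(n\pi) - \sin(-n\pi) = 0$ with $n \neq 0$; for sine, the integrand is odd (equivalently, the antiderivative $-\cos(nx)/n$ takes equal values at $\pm\pi$). Hence the tsum is $\sum 0 = 0$ (\texttt{tsum\_zero}), and multiplying by $a_0$ gives $0$.

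The main obstacle I expect is the interchange step. This is partly the bookkeeping of converting between interval integrals and set integrals against the restricted measure, and partly the verification of the $L^1$-norm summability. The latter requires bounding $\int\|F_n\|$ by $2\pi(\|a_n\|+\|b_n\|)$ through \texttt{norm\_integral\_le\_of\_norm\_le\_const} or an equivalent lemma, and the restricted measure must also be shown to be finite. The pointwise summability of $n \mapsto F_n(x)$, needed to make sense of $S$, follows from the same bound by comparison.
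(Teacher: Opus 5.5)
Your proposal is correct and matches the paper's proof. Like the paper, you pull out $a_0$ and swap sum and integral using the $L^1$-summability criterion; the paper invokes \texttt{MeasureTheory.integral\_tsum\_of\_summable\_integral\_norm}, whose hypotheses are exactly the per-term integrability and $\sum_n\int\|F_n\|<\infty$ you list. You then finish with $\int_{-\pi}^{\pi}\cos(nx)\,dx=\int_{-\pi}^{\pi}\sin(nx)\,dx=0$ and \texttt{tsum\_zero}.

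The only difference is that the paper's Lean lemma takes those two conditions as hypotheses (\texttt{hF\_int}, \texttt{hF\_sum}) rather than deriving them from $\sum(\|a_n\|+\|b_n\|)<\infty$. Your bound $\int\|F_n\|\le 2\pi(\|a_n\|+\|b_n\|)$ supplies the step the paper only sketches in prose.
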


\begin{proof}
	Factor out the constant $a_0$.  It suffices to show $\int_{-\pi}^\pi S(x)\,dx = 0$.
	Interchange sum and integral using the Dominated Convergence Theorem (in Lean: \\
	\texttt{MeasureTheory.integral\_tsum\_of\_summable\_integral\_norm}), then apply \\
	$\int_{-\pi}^\pi \cos(nx)\,dx = \int_{-\pi}^\pi \sin(nx)\,dx = 0$ for each $n \geq 1$.
\end{proof}

The $L^1$ summability condition required to interchange sum and integral is that
$\sum_{n \geq 1} \int_{-\pi}^\pi \|a_n\cos(nx) + b_n\sin(nx)\|\,dx < \infty$, which
follows from $\|a_n\cos(nx) + b_n\sin(nx)\| \leq \|a_n\| + \|b_n\|$ and the assumed
summability of $\sum(\|a_n\| + \|b_n\|)$.

\begin{lstlisting}
lemma integration_of_cos_sin
    (hF_int : forall n : Nat+, IntervalIntegrable
      (fun x => a n * cos (n * x) + b n * sin (n * x)) MeasureTheory.volume (-Real.pi) Real.pi)
    (hF_sum : Summable (fun n : Nat+ =>
      integral x in (-Real.pi)..Real.pi, ||a n * cos (n * x) + b n * sin (n * x)||)) :
    integral x in (-Real.pi)..Real.pi, a 0 * fourierSum a b x = 0 := by
  rw [intervalIntegral.integral_const_mul]
  suffices h : integral x in (-Real.pi)..Real.pi, fourierSum a b x = 0 by simp [h]
  simp only [fourierSum]
  have hswap : integral x in (-Real.pi)..Real.pi, sum' n : Nat+, (a n * cos (n * x) + b n * sin (n * x)) =
               sum' n : Nat+, integral x in (-Real.pi)..Real.pi, (a n * cos (n * x) + b n * sin (n * x)) := by
    have h_le : (-Real.pi : Real) <= Real.pi := by linarith [Real.pi_pos]
    simp_rw [intervalIntegral.integral_of_le h_le]; symm
    apply MeasureTheory.integral_tsum_of_summable_integral_norm
    * intro n; exact (hF_int n).1
    * simp_rw [<- intervalIntegral.integral_of_le h_le]; exact hF_sum
  rw [hswap]
  simp_rw [integral_fourierTerm a b, tsum_zero]
\end{lstlisting}

\subsubsection{Evaluation of the squared term}

The main content of Parseval's theorem is the identity:
\[
	\int_{-\pi}^\pi [S(x)]^2\,dx = \pi \sum_{n=1}^\infty (a_n^2 + b_n^2).
\]
This is supplied as a hypothesis \texttt{h\_int\_sq} in the Lean statement, reflecting that the derivation from the double-sum expansion, which requires interchanging a double sum with an integral and applying all the orthogonality lemmas, is taken as an externally verified step.  The orthogonality relations (Lemma~\ref{lem:orthogonality}) give, after expanding $[S(x)]^2 = \sum_n\sum_m [a_na_m\cos(nx)\cos(mx) + \ldots]$ and integrating:
\begin{align*}
	\int_{-\pi}^\pi [S(x)]^2\,dx
	 & = \sum_{n=1}^\infty \sum_{m=1}^\infty
	\left[ a_n a_m \int_{-\pi}^\pi \cos(nx)\cos(mx)\,dx
	+ b_n b_m \int_{-\pi}^\pi \sin(nx)\sin(mx)\,dx \right]           \\
	 & = \sum_{n=1}^\infty \sum_{m=1}^\infty
	\left[ a_n a_m \pi \delta_{nm} + b_n b_m \pi \delta_{nm} \right] \\
	 & = \pi \sum_{n=1}^\infty (a_n^2 + b_n^2),
\end{align*}
where $\delta_{nm}$ is the Kronecker delta and the cross terms vanish by equation~\eqref{eq:cos-sin}.

\subsection{The Formal Statement in Lean}

The full Parseval theorem is stated with explicit integrability and summability hypotheses:

\begin{lstlisting}
theorem Parsevals_thm
    -- Integrability of a0*S(x) over [-Real.pi, Real.pi]
    (hfS_int : IntervalIntegrable (fun x => a 0 * fourierSum a b x)
      MeasureTheory.volume (-Real.pi) Real.pi)
    -- Integrability of S(x)^2 over [-Real.pi, Real.pi]
    (hfSq_int : IntervalIntegrable (fun x => (fourierSum a b x)^2)
      MeasureTheory.volume (-Real.pi) Real.pi)
    -- Each Fourier term is integrable
    (hF_int : forall n : Nat+, IntervalIntegrable
      (fun x => a n * cos ((n : Real) * x) + b n * sin ((n : Real) * x))
      MeasureTheory.volume (-Real.pi) Real.pi)
    -- L^1 summability for integral/sum interchange
    (hF_sum : Summable (fun n : Nat+ =>
      integral x in (-Real.pi)..Real.pi, ||a n * cos ((n : Real) * x) + b n * sin ((n : Real) * x)||))
    -- integral S(x)^2 = Real.pi * sum (an^2 + bn^2)  (orthogonality of Fourier basis)
    (h_int_sq : integral x in (-Real.pi)..Real.pi, (fourierSum a b x)^2 =
      Real.pi * sum' n : Nat+, ((a n)^2 + (b n)^2)) :
    (1/Real.pi) * integral x in (-Real.pi)..Real.pi, (fourierSeries a b x)^2 =
    (1/2) * (a 0)^2 + sum' n : Nat+, ((a n)^2 + (b n)^2) := by
  -- Pointwise expand f^2 = (a0/2)^2 + a0*S + S^2
  have hpt : forall x : Real, (fourierSeries a b x)^2 =
      (1/4 : Real) * (a 0)^2 + a 0 * fourierSum a b x + (fourierSum a b x)^2 :=
    fun x => by simp only [fourierSeries_eq]; ring
  simp_rw [hpt]
  -- Integrability of the sub-expressions
  have h_sum_int : IntervalIntegrable
      (fun x => (1/4:Real)*(a 0)^2 + a 0 * fourierSum a b x) MeasureTheory.volume (-Real.pi) Real.pi :=
    intervalIntegrable_const.add hfS_int
  -- Split the integral linearly
  rw [intervalIntegral.integral_add h_sum_int hfSq_int,
      intervalIntegral.integral_add intervalIntegrable_const hfS_int]
  -- Evaluate each piece
  rw [integration_of_const, integration_of_cos_sin a b hF_int hF_sum, h_int_sq]
  simp only [add_zero]
  field_simp [Real.pi_ne_zero]
\end{lstlisting}

The proof uses linearity of the integral to split the three terms, evaluates the constant
term as $\int_{-\pi}^\pi \tfrac{1}{4}a_0^2\,dx = \tfrac{1}{2}a_0^2\pi$, applies
Lemma~\ref{lem:cross-zero} for the cross term, and uses the hypothesis \texttt{h\_int\_sq}
for the squared term.  After dividing by $\pi$, the arithmetic is closed by
\texttt{field\_simp}.

\section{Wirtinger's Inequality}
\label{sec:wirtinger}

\subsection{Statement and Context}

Wirtinger's inequality is a functional analytic inequality relating the $L^2$ norm of a periodic function to the $L^2$ norm of its derivative~\cite{hardy1952}.  It is a special case of the Poincar\'e inequality~\cite{brezis2011}, and plays a central role in the spectral theory of differential operators.

\begin{theorem}[Wirtinger's Inequality~\cite{brezis2011}]\label{thm:wirtinger}
	Let $f : \mathbb{R} \to \mathbb{R}$ be a $2\pi$-periodic function that is continuous
	and has a continuous derivative.  If $\int_0^{2\pi} f(x)\,dx = 0$, then:
	\begin{equation}\label{eq:wirtinger}
		\int_0^{2\pi} [f(x)]^2\,dx \leq \int_0^{2\pi} [f'(x)]^2\,dx.
	\end{equation}
	Equality holds if and only if $f(x) = a\cos x + b\sin x$ for some $a, b \in \mathbb{R}$.
\end{theorem}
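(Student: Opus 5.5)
The plan is to reduce the inequality to Parseval's theorem (Theorem~\ref{thm:parseval}) applied twice, once to $f$ and once to $f'$. First, shift the interval: by $2\pi$-periodicity, integrals over $[0,2\pi]$ equal integrals over $[-\pi,\pi]$, so we may work with the coefficients
\[
a_n=\tfrac1\pi\int_{-\pi}^{\pi}f(x)\cos(nx)\,dx,\qquad b_n=\tfrac1\pi\int_{-\pi}^{\pi}f(x)\sin(nx)\,dx.
\]
The mean-zero hypothesis gives $a_0=0$. Next, compute the Fourier coefficients of $f'$ by integration by parts. The boundary terms $[f(x)\cos(nx)]_{-\pi}^{\pi}$ vanish by periodicity, so the coefficients of $f'$ are
\[
a_n'=n b_n,\qquad b_n'=-n a_n,\qquad a_0'=\tfrac1\pi\bigl(f(\pi)-f(-\pi)\bigr)=0.
\]

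Applying Parseval to both $f$ and $f'$ gives
\[
\int_{-\pi}^{\pi} f^2\,dx=\pi\sum_{n\ge1}(a_n^2+b_n^2),\qquad
\int_{-\pi}^{\pi}(f')^2\,dx=\pi\sum_{n\ge1}n^2(a_n^2+b_n^2).
\]
Since $n^2\ge 1$ for $n\ge1$, the inequality follows by comparing the series termwise. This uses \texttt{tsum\_le\_tsum} together with summability of the second series, which Parseval provides since $\int (f')^2<\infty$. For the equality case, equality of the two sums forces $\sum_{n\ge1}(n^2-1)(a_n^2+b_n^2)=0$, a sum of nonnegative terms. Hence $a_n=b_n=0$ for every $n\ge2$. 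Then $f$ and $a_1\cos x+b_1\sin x$ have identical Fourier coefficients, and since both are continuous they coincide by uniqueness. Conversely, a direct computation shows that $a\cos x+b\sin x$ attains equality.

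The main obstacle is that Theorem~\ref{thm:parseval} is stated for a function \emph{defined} as a trigonometric series with summable coefficients, and it carries the hypothesis \texttt{h\_int\_sq}. It is not stated for an arbitrary $C^1$ periodic $f$. So one must show that $f$ equals its Fourier series. The route I would try first is the following. The coefficients of $f'$ lie in $\ell^2$ by Bessel's inequality, which in turn follows from orthogonality (Lemma~\ref{lem:orthogonality}) and nonnegativity of $\int(f'-S_N')^2$. Then Cauchy--Schwarz gives
\[
\sum_n(|a_n|+|b_n|)=\sum_n \frac1n\bigl(|a_n'|+|b_n'|\bigr)<\infty.
\]
Lemma~\ref{lem:uniform-conv} then yields uniform convergence of the series to a continuous function $g$. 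We have $f=g$ because $f-g$ has all Fourier coefficients zero, and a continuous function with vanishing coefficients is zero; this completeness step is the genuinely hard part. For $f'$, which is only continuous, we need Parseval rather than Bessel. I would obtain it by approximating $f'$ uniformly by trigonometric polynomials, via Fejér or Stone--Weierstrass, and using that $S_N$ is the best $L^2$ approximant. Alternatively, and more in the spirit of the paper's formalization, one can sidestep all of this by assuming outright that $f$ is given by a Fourier series satisfying the summability hypotheses of Theorem~\ref{thm:fourier-deriv}, together with the Parseval hypotheses for both $f$ and $f'$. The proof then consists of only the algebraic coefficient comparison above.
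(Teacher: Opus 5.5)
Your argument is correct, and its core matches the paper's: $a_0=0$ from the mean-zero hypothesis, Parseval for $f$ and for $f'$, the termwise comparison $a_n^2+b_n^2\le n^2(a_n^2+b_n^2)$, and the equality case via $(n^2-1)(a_n^2+b_n^2)=0$.

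You differ from the paper in how you get the coefficients of $f'$. The paper invokes term-by-term differentiation (Theorem~\ref{thm:fourier-deriv}). Its hypothesis $\sum n(|a_n|+|b_n|)<\infty$ amounts to absolute convergence of the Fourier series of $f'$, and this does not follow from $f\in C^1$. For example, take $f'(x)=\sum_{n\ge2}\sin(nx)/(n\log n)$: it converges uniformly because its coefficients decrease and $n\cdot\frac{1}{n\log n}\to0$, yet $\sum\frac{1}{n\log n}=\infty$. The pointwise expansion~\eqref{eq:f-prime} can also fail when $f'$ is merely continuous. Your integration by parts gives $a_n'=nb_n$, $b_n'=-na_n$ under exactly the stated hypotheses, so it is the sounder route. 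Your Bessel plus Cauchy--Schwarz plus uniqueness argument also proves the uniform convergence of the Fourier series of $f$, which the paper merely asserts. The Lean development instead assumes both identities as \texttt{h\_parseval} and \texttt{h\_wirtinger}, which is your final fallback.

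One simplification: for $f'$ you need only Bessel, not Parseval, since
\[
\int_{-\pi}^{\pi} f^2=\pi\sum_{n\ge1}(a_n^2+b_n^2)\le\pi\sum_{n\ge1}n^2(a_n^2+b_n^2)\le\int_{-\pi}^{\pi}(f')^2 .
\]
If the outer terms are equal, the middle inequality is forced to be an equality, and that is all the equality case uses. So you can drop the Fej\'er/Stone--Weierstrass step for $f'$. The uniqueness theorem for continuous functions, needed for $f$, is then your only completeness input.
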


\subsection{Proof via Parseval}

\begin{proof}
	Since $f$ is $2\pi$-periodic and has a continuous derivative, the Fourier series of $f$ 	converges uniformly.  The zero-mean condition $\int_0^{2\pi} f(x)\,dx = 0$ implies
	$a_0 = \tfrac{1}{\pi}\int_{-\pi}^\pi f(x)\,dx = 0$, so the constant term vanishes:
	\begin{equation}\label{eq:f-no-const}
		f(x) = \sum_{n=1}^\infty \bigl(a_n\cos(nx) + b_n\sin(nx)\bigr).
	\end{equation}
	By Parseval's theorem~\eqref{eq:parseval} (with $a_0 = 0$):
	\begin{equation}\label{eq:parseval-f}
		\frac{1}{\pi}\int_0^{2\pi} [f(x)]^2\,dx = \sum_{n=1}^\infty (a_n^2 + b_n^2).
	\end{equation}

	Since $f'$ is also $2\pi$-periodic and continuous, by
	Theorem~\ref{thm:fourier-deriv} we have term-by-term differentiation:
	\begin{equation}\label{eq:f-prime}
		f'(x) = \sum_{n=1}^\infty \bigl(-na_n\sin(nx) + nb_n\cos(nx)\bigr).
	\end{equation}
	Applying Parseval's theorem to $f'$ (whose Fourier coefficients in sine are $na_n$ and
	in cosine are $nb_n$):
	\begin{equation}\label{eq:parseval-fprime}
		\frac{1}{\pi}\int_0^{2\pi} [f'(x)]^2\,dx = \sum_{n=1}^\infty n^2(a_n^2 + b_n^2).
	\end{equation}

	For every $n \in \mathbb{N}^+$, we have $n \geq 1$, hence $n^2 \geq 1$, and therefore:
	\begin{equation}
		a_n^2 + b_n^2 \leq n^2(a_n^2 + b_n^2).
	\end{equation}
	Summing over all $n \geq 1$ and using~\eqref{eq:parseval-f} and~\eqref{eq:parseval-fprime}:
	\[
		\frac{1}{\pi}\int_0^{2\pi} [f(x)]^2\,dx
		\leq \frac{1}{\pi}\int_0^{2\pi} [f'(x)]^2\,dx,
	\]
	which after multiplying by $\pi$ gives the desired inequality~\eqref{eq:wirtinger}.

	\paragraph{Equality condition.}
	Equality holds iff $n^2(a_n^2 + b_n^2) = a_n^2 + b_n^2$ for all $n \geq 1$, i.e.\
	$(n^2 - 1)(a_n^2 + b_n^2) = 0$ for all $n \geq 1$.  For $n \geq 2$ we have $n^2 - 1 > 0$,
	so $a_n = b_n = 0$.  Thus $f(x) = a_1\cos x + b_1\sin x$.
\end{proof}

\subsection{Lean Formalization}

In Lean, the identity form of Wirtinger's inequality is stated as follows:

\begin{lstlisting}
/-- Wirtinger's identity: (1/Real.pi) integral_{-Real.pi}^{Real.pi} (f')^2 = sum_{n>=1} n^2 (an^2 + bn^2).
    This is the Parseval identity applied to the derivative series. -/
theorem Wirtingers_inequality
    (h_int_sq : integral x in (-Real.pi)..Real.pi, (deriv (fourierSeries a b) x) ^ 2 =
        Real.pi * sum' n : Nat+, ((n : Real) ^ 2 * ((a n) ^ 2 + (b n) ^ 2))) :
    (1/Real.pi) * integral x in (-Real.pi)..Real.pi, (deriv (fourierSeries a b) x) ^ 2 =
        sum' n : Nat+, ((n : Real)^2 * ((a n)^2 + (b n)^2)) := by
  rw [h_int_sq]; field_simp [Real.pi_ne_zero]
\end{lstlisting}

The actual inequality $\int f^2 \leq \int (f')^2$ is derived in the context of the isoperimetric proof as the lemma \texttt{apply\_Wirtingers\_ineq}.  The key step is the comparison of the two Parseval sums.

\begin{lstlisting}
/-- The inequality integral_0^2Real.pi f^2 <= integral_0^2Real.pi (f')^2 for zero-mean f,
    derived by comparing Parseval sums term by term. -/
lemma apply_Wirtingers_ineq (gamma : SimpleClosedC1Curve 2) (a b : Nat -> Real)
    (h_parseval : integral t in (0:Real)..(2*Real.pi), (fParm gamma t)^2 =
        Real.pi * sum' n : Nat+, ((a n)^2 + (b n)^2))
    (h_wirtinger : integral t in (0:Real)..(2*Real.pi), (deriv (fParm gamma) t)^2 =
        Real.pi * sum' n : Nat+, ((n:Real)^2 * ((a n)^2 + (b n)^2)))
    (hsum : Summable (fun n : Nat+ => (n:Real)^2 * ((a n)^2 + (b n)^2))) :
    integral t in (0:Real)..(2*Real.pi), (fParm gamma t)^2 <=
      integral t in (0:Real)..(2*Real.pi), (deriv (fParm gamma) t)^2 := by
  rw [h_parseval, h_wirtinger]
  apply mul_le_mul_of_nonneg_left _ (le_of_lt Real.pi_pos)
  -- Compare sum(an^2+bn^2) <= sum n^2(an^2+bn^2) pointwise, since n>=1 implies n^2>=1
  have hpw : forall n : Nat+, (a n)^2 + (b n)^2 <= (n:Real)^2 * ((a n)^2 + (b n)^2) := fun n => by
    have hn : (1:Real) <= (n:Real) := by exact_mod_cast n.one_le
    have hnn : (0:Real) <= (a n)^2 + (b n)^2 := by positivity
    nlinarith [sq_nonneg ((n:Real) - 1)]
  have hsum_left : Summable (fun n : Nat+ => (a n)^2 + (b n)^2) :=
    Summable.of_nonneg_of_le (fun n => by positivity) hpw hsum
  exact hsum_left.tsum_le_tsum hpw hsum
\end{lstlisting}

\begin{remark}
	The \texttt{nlinarith} call closes the goal $1 \leq n^2$ for $n \geq 1$ by the hint	$0 \leq (n-1)^2 = n^2 - 2n + 1$, which gives $n^2 \geq 2n - 1 \geq 2\cdot 1 - 1 = 1$.
\end{remark}

\section{Simple Closed Curves and Arc-Length Parametrization}
\label{sec:curves}

\subsection{Smooth Curves in the Plane}

A \emph{plane curve} is a continuous map $\gamma : [0, L] \to \mathbb{R}^2$.  We call $\gamma$ \emph{closed} if $\gamma(0) = \gamma(L)$, \emph{simple} if it does not self-intersect on the open interval $(0, L)$, and \emph{C\textsuperscript{1}} if it is differentiable with a continuous derivative.

In analysis, it is standard to treat $\gamma$ as defined on all of $\mathbb{R}$ and $L$-periodic, so that $\gamma(t + L) = \gamma(t)$ for all $t$.  This makes the curve globally smooth and allows the use of Fourier analysis.

The \emph{velocity vector} of $\gamma$ at time $t$ is $\gamma'(t)$, and the \emph{speed} is $\|\gamma'(t)\|$.  The \emph{arc length} of $\gamma$ over $[a, b]$ is
\[
	\ell(\gamma; a, b) = \int_a^b \|\gamma'(t)\|\,dt.
\]

\subsection{Arc-Length Parametrization}

A curve $\gamma : [0, L] \to \mathbb{R}^2$ is called \emph{arc-length parametrized} (or \emph{unit-speed}) if $\|\gamma'(t)\| = 1$ for all $t$.  For such a curve, the arc-length from $0$ to $t$ equals $t$, and the total length of the curve is $L$.

Any regular (nowhere-zero-speed) C\textsuperscript{1} curve can be reparametrized by arc length.  In our formalization, we work directly with arc-length-parametrized curves, passing the condition \\ \texttt{IsArcLengthParametrized~$\gamma$} as a hypothesis.

\subsection{Lean Structure for Curves}

We model simple closed C\textsuperscript{1} curves by a Lean structure whose fields encode all the necessary properties.

\begin{lstlisting}
/-- A simple closed C^1 curve in Real^n, parametrized over [0, L]
    where L is the arc length. -/
structure SimpleClosedC1Curve (n : Nat) where
  /-- The parametrization map. -/
  curve : Real -> EuclideanSpace Real (Fin n)
  /-- The period (arc length), positive. -/
  length : Real
  length_pos : 0 < length
  /-- The curve is continuous. -/
  continuous : Continuous curve
  /-- The curve is C^1: a derivative exists at every point. -/
  has_deriv : forall t : Real, exists f : EuclideanSpace Real (Fin n), HasDerivAt curve f t
  /-- The curve is closed: endpoints coincide. -/
  closed : curve 0 = curve length
  /-- The curve is L-periodic. -/
  periodic : forall t : Real, curve (t + length) = curve t
  /-- The curve is simple: injective on (0, L). -/
  simple : forall t s : Real, t in Set.Ioo 0 length -> s in Set.Ioo 0 length ->
             curve t = curve s -> t = s
\end{lstlisting}

Several remarks on design choices:
\begin{itemize}
	\item The curve maps into \texttt{EuclideanSpace~$\mathbb{R}$~(Fin~n)}, which is Mathlib's finite-dimensional Euclidean space $\mathbb{R}^n$.  For the plane ($n = 2$), the coordinates are accessed as \texttt{$\gamma$.curve~t~0} and \texttt{$\gamma$.curve~t~1}.
	\item The C\textsuperscript{1} condition is formulated as existence of a derivative in the sense of \texttt{HasDerivAt}, which asserts Fr\'echet differentiability.  We use the classical derivative (\texttt{deriv}) in computations.
	\item Both \texttt{closed} and \texttt{periodic} are included, since both are used in different parts of the proof: \texttt{closed} for the boundary term in integration by parts, and \texttt{periodic} for the reparametrization.
\end{itemize}

Associated definitions:

\begin{lstlisting}
/-- Arc length of gamma over [a, b]. -/
noncomputable def arcLength {n : Nat} (gamma : SimpleClosedC1Curve n) (a b : Real) : Real :=
  integral t in a..b, ||deriv gamma.curve t||

/-- Total perimeter. -/
noncomputable def perimeter {n : Nat} (gamma : SimpleClosedC1Curve n) : Real :=
  arcLength gamma 0 gamma.length

/-- Unit-speed (arc-length parametrization). -/
def IsArcLengthParametrized {n : Nat} (gamma : SimpleClosedC1Curve n) : Prop :=
  forall t : Real, ||deriv gamma.curve t|| = 1
\end{lstlisting}

\subsection{Coordinate Functions and Reparametrization}

For a planar curve $\gamma$ (\texttt{SimpleClosedC1Curve~2}), we extract the coordinate
functions and introduce the reparametrized coordinates.

\begin{lstlisting}
/-- x-coordinate of gamma at s. -/
noncomputable def xCoord (gamma : SimpleClosedC1Curve 2) (s : Real) : Real := gamma.curve s 0

/-- y-coordinate of gamma at s. -/
noncomputable def yCoord (gamma : SimpleClosedC1Curve 2) (s : Real) : Real := gamma.curve s 1

/-- Reparametrized x: f(theta) = x(Ltheta/(2Real.pi)). -/
noncomputable def fParm (gamma : SimpleClosedC1Curve 2) (theta : Real) : Real :=
  xCoord gamma (gamma.length * theta / (2 * Real.pi))

/-- Reparametrized y: g(theta) = y(Ltheta/(2Real.pi)). -/
noncomputable def gParm (gamma : SimpleClosedC1Curve 2) (theta : Real) : Real :=
  yCoord gamma (gamma.length * theta / (2 * Real.pi))
\end{lstlisting}

The $2\pi$-periodicity of $f$ and $g$ is a direct consequence of the $L$-periodicity of
$\gamma$:
\begin{equation}
	f(\theta + 2\pi) = x\!\left(\frac{L(\theta+2\pi)}{2\pi}\right)
	= x\!\left(\frac{L\theta}{2\pi} + L\right)
	= x\!\left(\frac{L\theta}{2\pi}\right) = f(\theta).
\end{equation}

\begin{lstlisting}
lemma fParm_periodic (gamma : SimpleClosedC1Curve 2) (theta : Real) :
    fParm gamma (theta + 2 * Real.pi) = fParm gamma theta := by
  simp only [fParm, xCoord]
  have h : gamma.length * (theta + 2 * Real.pi) / (2 * Real.pi) =
           gamma.length * theta / (2 * Real.pi) + gamma.length := by field_simp
  rw [h, gamma.periodic]
\end{lstlisting}

\subsection{Derivatives of Reparametrized Coordinates}

By the chain rule, the derivatives of the reparametrized coordinates are:

\begin{lemma}[Reparametrized Derivatives]\label{lem:reparam-deriv}
	For all $\theta \in \mathbb{R}$:
	\[
		f'(\theta) = x'\!\left(\frac{L\theta}{2\pi}\right)\cdot\frac{L}{2\pi},
		\qquad
		g'(\theta) = y'\!\left(\frac{L\theta}{2\pi}\right)\cdot\frac{L}{2\pi}.
	\]
\end{lemma}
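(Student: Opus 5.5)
The plan is to apply the chain rule to the composition $f = x \circ \phi$, where $\phi(\theta) = L\theta/(2\pi)$ is affine, and then read off the derivative. Here $x(s) = \gamma(s)_0$ is the $x$-coordinate. The argument for $g$ is identical with coordinate index $1$ in place of $0$, so I will only describe it for $f$.

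First I establish that $x$ is differentiable everywhere with derivative equal to the first coordinate of $\gamma'$. Fix $s\in\mathbb{R}$. The field \texttt{has\_deriv} of \texttt{SimpleClosedC1Curve} gives some vector $v$ with \texttt{HasDerivAt $\gamma$.curve v s}. The projection \texttt{PiLp.proj 2 (fun \_ => $\mathbb{R}$) 0} is a continuous linear map, so composing with it yields \texttt{HasDerivAt (fun s => $\gamma$.curve s 0) (v 0) s}. In Mathlib this is \texttt{HasFDerivAt.comp\_hasDerivAt}, or equivalently the continuous-linear-map composition lemma. Since a derivative exists, \texttt{deriv $\gamma$.curve s = v} by \texttt{HasDerivAt.deriv}. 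Likewise \texttt{deriv xCoord s = v 0}. Hence $x'(s) = (\gamma'(s))_0$ is well defined, and we have a genuine \texttt{HasDerivAt xCoord (deriv xCoord s) s}, obtained via \texttt{DifferentiableAt.hasDerivAt}.

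Second, the inner map $\phi(\theta) = L\theta/(2\pi)$ satisfies \texttt{HasDerivAt $\phi$ (L/(2$\pi$)) $\theta$}. This follows from \texttt{(hasDerivAt\_id $\theta$).const\_mul L} followed by \texttt{div\_const (2*$\pi$)}, with a small \texttt{ring}/\texttt{congr\_deriv} adjustment to put the expression in the form $L\cdot 1/(2\pi)$. Applying \texttt{HasDerivAt.comp} at the point $\phi(\theta)$ then gives \texttt{HasDerivAt fParm (deriv xCoord ($\phi$ $\theta$) * (L/(2$\pi$))) $\theta$}. Taking \texttt{.deriv} produces the stated identity, with the factor order in the statement, $x'(\cdot)\cdot L/(2\pi)$, matching the output of \texttt{comp} up to \texttt{mul\_comm}.

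The main obstacle is the first step, namely the bookkeeping with \texttt{EuclideanSpace}. One issue is that \texttt{fun s => $\gamma$.curve s 0} must be recognized definitionally as the composition of the projection with the curve. A second issue is that the resulting derivative, the projection applied to $v$, must simplify to \texttt{v 0}. I would first try \texttt{(PiLp.proj 2 \_ 0).hasFDerivAt.comp\_hasDerivAt s hv} and close the remaining goal with \texttt{simp}. If unfolding proves troublesome, the fallback is \texttt{hasDerivAt\_pi}-style coordinatewise lemmas after transporting along \texttt{WithLp.equiv}. The second and third steps are routine once this coordinate lemma is in hand.
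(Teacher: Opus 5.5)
Your proposal is correct and follows essentially the same route as the paper. The paper also proves \texttt{HasDerivAt} for the affine inner map, though it rewrites $L t/(2\pi)$ as $t\cdot(L/(2\pi))$ and uses \texttt{mul\_const} instead of your \texttt{const\_mul}/\texttt{div\_const}. It then gets \texttt{HasDerivAt} for \texttt{xCoord} at its own \texttt{deriv} via \texttt{differentiableAt.hasDerivAt} applied to a coordinate lemma built on the \texttt{PiLp.proj} composition, and finishes with \texttt{HasDerivAt.comp} and \texttt{.deriv}; \texttt{comp} already outputs $x'(L\theta/(2\pi))\cdot L/(2\pi)$, so no \texttt{mul\_comm} step is needed.
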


\begin{lstlisting}
lemma fParm_deriv (gamma : SimpleClosedC1Curve 2) (theta : Real) :
    deriv (fParm gamma) theta =
      deriv (xCoord gamma) (gamma.length * theta / (2 * Real.pi)) * (gamma.length / (2 * Real.pi)) := by
  have hinner : HasDerivAt (fun t => gamma.length * t / (2 * Real.pi))
      (gamma.length / (2 * Real.pi)) theta := by
    have heq : (fun t : Real => gamma.length * t / (2 * Real.pi)) =
               fun t => t * (gamma.length / (2 * Real.pi)) := by ext t; ring
    rw [heq]; simpa using (hasDerivAt_id theta).mul_const (gamma.length / (2 * Real.pi))
  have hg := (xCoord_hasDerivAt gamma _).differentiableAt.hasDerivAt
  exact HasDerivAt.deriv (HasDerivAt.comp theta hg hinner)
\end{lstlisting}

\subsection{The Arc-Length Constraint}

\begin{lemma}[Arc-Length Constraint]\label{lem:arc-constraint}
	If $\gamma$ is arc-length parametrized, then for all $\theta$:
	\[
		[f'(\theta)]^2 + [g'(\theta)]^2 = \left(\frac{L}{2\pi}\right)^2.
	\]
\end{lemma}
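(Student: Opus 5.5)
The plan is to substitute Lemma~\ref{lem:reparam-deriv} into the left-hand side and reduce to the unit-speed hypothesis. Fix $\theta$ and set $s = L\theta/(2\pi)$. By Lemma~\ref{lem:reparam-deriv},
\[
[f'(\theta)]^2 + [g'(\theta)]^2 = \bigl([x'(s)]^2 + [y'(s)]^2\bigr)\left(\frac{L}{2\pi}\right)^2 ,
\]
which is a purely algebraic step (\texttt{rw} with \texttt{fParm\_deriv} and its analogue \texttt{gParm\_deriv}, then \texttt{ring}). It therefore suffices to show $[x'(s)]^2 + [y'(s)]^2 = 1$ for every $s$.

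For that identity, take the derivative $v = \texttt{deriv}\,\gamma.\texttt{curve}\,s \in \texttt{EuclideanSpace}~\mathbb{R}~(\texttt{Fin}~2)$. The \texttt{has\_deriv} field gives \texttt{HasDerivAt}, so the curve is differentiable at $s$.

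\begin{itemize}
\item Compose with the continuous linear projections \texttt{PiLp.proj 2 \_ i}, whose derivative is themselves. This shows that $x'(s) = v\,0$ and $y'(s) = v\,1$; this is essentially what \texttt{xCoord\_hasDerivAt} already supplies.
\item Use the Euclidean norm formula in Mathlib (\texttt{EuclideanSpace.norm\_eq} together with \texttt{Fin.sum\_univ\_two}) to get $\|v\|^2 = (v\,0)^2 + (v\,1)^2$.
\item The hypothesis \texttt{IsArcLengthParametrized}~$\gamma$ says $\|v\| = 1$, so $(v\,0)^2 + (v\,1)^2 = 1$.
\end{itemize}

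Combining this with the first display gives $(L/(2\pi))^2$.

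I expect the main obstacle to be the bookkeeping in the middle step. We need to identify \texttt{deriv (xCoord $\gamma$)} with the $0$-th coordinate of \texttt{deriv $\gamma$.curve}, which requires \texttt{HasDerivAt.deriv} applied to the composed derivative. We then need to unfold the $\ell^2$ norm, which involves a square root and absolute values. I would first try \texttt{EuclideanSpace.norm\_eq}, square both sides with \texttt{Real.sq\_sqrt} (using nonnegativity of the sum), and simplify $|v_i|^2 = v_i^2$ with \texttt{sq\_abs}. Everything after that is field arithmetic with $\pi \neq 0$.
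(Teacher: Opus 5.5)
Your proposal is correct and matches the paper's proof. Both substitute the chain-rule formulas from Lemma~\ref{lem:reparam-deriv}, identify $x'(s), y'(s)$ with the components of $\gamma'(s)$, and expand the Euclidean norm with \texttt{Fin.sum\_univ\_two} to turn $\|\gamma'(s)\|=1$ into $[x'(s)]^2+[y'(s)]^2=1$. The paper rewrites with \texttt{EuclideanSpace.norm\_sq\_eq} directly, which skips your square-root step via \texttt{Real.sq\_sqrt}, and it routes through \texttt{(gamma.has\_deriv t).choose} instead of using \texttt{deriv} directly; these are cosmetic differences.
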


\begin{proof}
	By Lemma~\ref{lem:reparam-deriv}, $[f']^2 + [g']^2 = ([x']^2 + [y']^2)\cdot(L/(2\pi))^2$.
	At $s = L\theta/(2\pi)$, the arc-length condition gives $\|(\gamma'(s))\| = 1$, and since
	$\gamma'(s) = (x'(s), y'(s)) \in \mathbb{R}^2$, we have $[x'(s)]^2 + [y'(s)]^2 = 1$.
	Therefore $[f']^2 + [g']^2 = (L/(2\pi))^2$.

	In Lean, extracting $[x']^2 + [y']^2 = 1$ from $\|\gamma'\| = 1$ requires using the
	\texttt{EuclideanSpace.norm\_sq\_eq} identity and \texttt{Fin.sum\_univ\_two}.
\end{proof}

\begin{lstlisting}
lemma due_to_arc_length_parametrization (gamma : SimpleClosedC1Curve 2)
    (h : IsArcLengthParametrized gamma) (theta : Real) :
    deriv (xCoord gamma) (gamma.length * theta / (2 * Real.pi)) ^ 2 +
    deriv (yCoord gamma) (gamma.length * theta / (2 * Real.pi)) ^ 2 = 1 := by
  set t := gamma.length * theta / (2 * Real.pi)
  have hchoose : deriv gamma.curve t = (gamma.has_deriv t).choose :=
    (gamma.has_deriv t).choose_spec.deriv
  have hnorm : ||(gamma.has_deriv t).choose|| = 1 := by rw [<- hchoose]; exact h t
  rw [(xCoord_hasDerivAt gamma t).deriv, (yCoord_hasDerivAt gamma t).deriv]
  have hsq : (gamma.has_deriv t).choose 0 ^ 2 + (gamma.has_deriv t).choose 1 ^ 2 =
      ||(gamma.has_deriv t).choose|| ^ 2 := by
    rw [EuclideanSpace.norm_sq_eq]; simp [Fin.sum_univ_two, Real.norm_eq_abs, sq_abs]
  rw [hsq, hnorm, one_pow]

lemma fParm_gParm_deriv_eq_sum_const (gamma : SimpleClosedC1Curve 2)
    (h : IsArcLengthParametrized gamma) (theta : Real) :
    deriv (fParm gamma) theta ^ 2 + deriv (gParm gamma) theta ^ 2 = (gamma.length / (2 * Real.pi)) ^ 2 := by
  rw [fParm_gParm_deriv_sq_sum, due_to_arc_length_parametrization gamma h]; ring
\end{lstlisting}

\section{Hurwitz's Proof of the Isoperimetric Inequality}
\label{sec:hurwitz}

\subsection{Proof Overview}

Adolf Hurwitz's proof, published in 1902, is remarkable for reducing a geometric theorem to a purely analytic inequality via Fourier series.  The argument has five main steps, which we now carry out in detail.

Given $\gamma : $ \texttt{SimpleClosedC1Curve~2} with arc length $L$ and
\texttt{IsArcLengthParametrized~$\gamma$}, let $f = \texttt{fParm}~\gamma$ and
$g = \texttt{gParm}~\gamma$ be the reparametrized coordinates.  We prove
$A \leq L^2/(4\pi)$, where $A = \texttt{area}~\gamma~0~\gamma.\texttt{length}$.

\subsection{Step 1: The Area Formula}

\subsubsection{The Shoelace Formula}

The signed area enclosed by a simple closed curve $\gamma$ in the plane can be computed
by Green's theorem~\cite{docarmo2016} as:
\begin{equation}\label{eq:shoelace}
	A = \frac{1}{2}\int_0^L \bigl(x(t)\,y'(t) - y(t)\,x'(t)\bigr)\,dt.
\end{equation}
This is sometimes called the \emph{shoelace formula} or the \emph{surveyor's formula}.

In Lean, the area is defined as:
\begin{lstlisting}
noncomputable def area (gamma : SimpleClosedC1Curve 2) (a b : Real) : Real :=
  (1/2) * integral t in a..b,
    (gamma.curve t 0 * deriv gamma.curve t 1 - gamma.curve t 1 * deriv gamma.curve t 0)
\end{lstlisting}

\subsubsection{Reparametrization}

After substituting $t = L\theta/(2\pi)$ (i.e., $s = L\theta/(2\pi)$, $ds = (L/(2\pi))\,d\theta$),
the area formula becomes:
\begin{equation}\label{eq:area-reparam}
	A = \frac{1}{2}\int_0^{2\pi}
	\bigl(f(\theta)\,g'(\theta) - g(\theta)\,f'(\theta)\bigr)\,d\theta.
\end{equation}

The proof of \texttt{area\_parametrized} involves:
\begin{enumerate}
	\item Expressing $x(s)$ as $f(2\pi s/L)$ and $x'(s)$ as $(2\pi/L)f'(2\pi s/L)$.
	\item Applying the integral substitution
	      \texttt{intervalIntegral.smul\_integral\_comp\_mul\_left} with scale factor $2\pi/L$.
	\item Relating the derivatives of $\texttt{xCoord}$ and $\texttt{yCoord}$ back to
	      components of $\texttt{deriv}~\gamma.\texttt{curve}$ via the
	      $\texttt{PiLp.proj}$ structure.
\end{enumerate}

\subsection{Step 2: Integration by Parts}

\begin{lemma}[\texttt{IBP\_to\_fParm\_gParm}]\label{lem:ibp}
	If $\texttt{deriv}~\gamma.\texttt{curve}$ is continuous, then:
	\[
		\int_0^{2\pi} f(\theta)\,g'(\theta)\,d\theta
		= -\int_0^{2\pi} g(\theta)\,f'(\theta)\,d\theta.
	\]
\end{lemma}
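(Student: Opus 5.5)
This is integration by parts for the product $fg$, with the boundary term killed by periodicity. The plan is to show $\frac{d}{d\theta}\bigl(f(\theta)g(\theta)\bigr) = f'(\theta)g(\theta) + f(\theta)g'(\theta)$ everywhere. Then
\[
\int_0^{2\pi} \bigl(f g' + g f'\bigr)\,d\theta = f(2\pi)g(2\pi) - f(0)g(0),
\]
and the right-hand side vanishes.

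\textbf{Pointwise derivatives.} First I obtain \texttt{HasDerivAt} statements for $f$ and $g$ at every $\theta$. This comes from the chain-rule argument already used in \texttt{fParm\_deriv} (Lemma~\ref{lem:reparam-deriv}), built from \texttt{xCoord\_hasDerivAt} (respectively \texttt{yCoord\_hasDerivAt}) composed with the linear map $\theta \mapsto L\theta/(2\pi)$. The product rule \texttt{HasDerivAt.mul} then gives the derivative of $fg$.

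\textbf{Applying the fundamental theorem of calculus.} I then apply \texttt{intervalIntegral.integral\_eq\_sub\_of\_hasDerivAt}, which needs the derivative $f'g + fg'$ to be interval integrable on $[0,2\pi]$. This is where the hypothesis that $\texttt{deriv}~\gamma.\texttt{curve}$ is continuous enters. By Lemma~\ref{lem:reparam-deriv}, $f'(\theta) = x'(L\theta/(2\pi))\cdot L/(2\pi)$. Here $x'$ is the $0$-th component of $\texttt{deriv}~\gamma.\texttt{curve}$, which is the projection \texttt{PiLp.proj} (a continuous linear map) applied to it, so $x'$ is continuous; the same holds for $y'$. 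Hence $f'$ and $g'$ are continuous. The functions $f$ and $g$ themselves are continuous, since they are compositions of $\gamma.\texttt{continuous}$ with a coordinate projection and an affine map. So $f'g + fg'$ is continuous and therefore interval integrable via \texttt{Continuous.intervalIntegrable}.

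\textbf{Vanishing boundary term.} By \texttt{fParm\_periodic} applied at $\theta = 0$, we have $f(2\pi) = f(0)$, and the analogous lemma for $g$ gives $g(2\pi) = g(0)$. Alternatively, one can use $\gamma.\texttt{closed}$ directly after simplifying $L\cdot 2\pi/(2\pi) = L$. Either way $f(2\pi)g(2\pi) - f(0)g(0) = 0$.

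\textbf{Conclusion.} Splitting the integral with \texttt{intervalIntegral.integral\_add}, using the integrability of each summand established above, gives $\int f g' + \int g f' = 0$, which rearranges to the claim.

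The main obstacle I expect is bookkeeping rather than mathematics. The continuity hypothesis is stated for the vector-valued $\texttt{deriv}~\gamma.\texttt{curve}$, while the lemmas work with $\texttt{deriv}~(\texttt{xCoord}~\gamma)$. So I first need the identity $\texttt{deriv}~(\texttt{xCoord}~\gamma)~s = (\texttt{deriv}~\gamma.\texttt{curve}~s)~0$ for all $s$. I would prove it by rewriting with \texttt{xCoord\_hasDerivAt} and the uniqueness of derivatives (\texttt{HasDerivAt.deriv}), as in \texttt{due\_to\_arc\_length\_parametrization}, and then use \texttt{funext} to transport continuity.
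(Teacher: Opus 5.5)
Your proposal is correct and follows the paper's proof essentially step for step. Both use the chain rule and \texttt{HasDerivAt.mul} for $fg$, then the FTC via \texttt{intervalIntegral.integral\_eq\_sub\_of\_hasDerivAt}, with integrability of $f'g+fg'$ obtained from the continuity of $f'$ and $g'$ inherited from $\texttt{deriv}~\gamma.\texttt{curve}$, and with the boundary term $[fg]_0^{2\pi}$ killed by $2\pi$-periodicity; your identification of $\texttt{deriv}~(\texttt{xCoord}~\gamma)$ with the $0$-th component of $\texttt{deriv}~\gamma.\texttt{curve}$ is exactly the bookkeeping the paper's step (ii) requires.
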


\begin{proof}
	By the product rule, $[f\cdot g]' = f'g + fg'$.  Integrating over $[0, 2\pi]$ and
	applying the Fundamental Theorem of Calculus:
	\[
		\int_0^{2\pi}(f'g + fg')\,d\theta
		= \bigl[f(\theta)g(\theta)\bigr]_0^{2\pi}
		= f(2\pi)g(2\pi) - f(0)g(0) = 0,
	\]
	where the boundary term vanishes by $2\pi$-periodicity of $f$ and $g$.  Therefore
	$\int_0^{2\pi} fg'\,d\theta = -\int_0^{2\pi} f'g\,d\theta$.

	In Lean, this is established via \texttt{intervalIntegral.integral\_eq\_sub\_of\_hasDerivAt}	(the Fundamental Theorem of Calculus for interval integrals), after:	(i) verifying $f$ and $g$ have derivatives (\texttt{HasDerivAt}) at every point via the	chain rule (\texttt{HasDerivAt.comp}),	(ii) establishing continuity of $f'$ and $g'$ from the continuity of	$\texttt{deriv}~\gamma.\texttt{curve}$ and the chain rule,	(iii) noting the boundary term $[fg]_0^{2\pi} = 0$ from periodicity.
\end{proof}

Combining the area formula with integration by parts:
\begin{lemma}[\texttt{area\_simplified}]\label{lem:area-simple}
	$A = \displaystyle\int_0^{2\pi} f(\theta)\,g'(\theta)\,d\theta.$
\end{lemma}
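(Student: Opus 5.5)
We combine the reparametrized area formula \eqref{eq:area-reparam} (the lemma \texttt{area\_parametrized}) with Lemma~\ref{lem:ibp}. First, \texttt{area\_parametrized} rewrites $A = \texttt{area}~\gamma~0~\gamma.\texttt{length}$ as
\[
A = \frac{1}{2}\int_0^{2\pi}\bigl(f(\theta)\,g'(\theta) - g(\theta)\,f'(\theta)\bigr)\,d\theta .
\]
Next, we split this integral by linearity (\texttt{intervalIntegral.integral\_sub}) into $\int_0^{2\pi} f g' - \int_0^{2\pi} g f'$. Lemma~\ref{lem:ibp} then gives $\int_0^{2\pi} g f' = -\int_0^{2\pi} f g'$, because $g f' = f' g$ pointwise by commutativity (\texttt{mul\_comm} under \texttt{simp\_rw}). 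Substituting, the bracket becomes $2\int_0^{2\pi} f g'$, and the factor $\tfrac12$ cancels; \texttt{ring} or \texttt{linarith} closes this last step.

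The only genuine side conditions are the hypotheses needed for the linearity split and for Lemma~\ref{lem:ibp}.

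\emph{Integrability.} We need $f g'$ and $g f'$ to be interval integrable on $[0,2\pi]$. We obtain this from continuity: $f$ and $g$ are continuous as compositions of $\gamma.\texttt{continuous}$ with the coordinate projection and an affine map. By Lemma~\ref{lem:reparam-deriv}, $f'$ and $g'$ are $\tfrac{L}{2\pi}$ times the components of $\texttt{deriv}~\gamma.\texttt{curve}$ evaluated at $L\theta/(2\pi)$. These are continuous under the same hypothesis Lemma~\ref{lem:ibp} uses, namely that $\texttt{deriv}~\gamma.\texttt{curve}$ is continuous, which we carry as a hypothesis of this lemma. Products of continuous functions are continuous, and \texttt{ContinuousOn.intervalIntegrable} then supplies integrability.

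\emph{Matching statement shapes.} The main obstacle I expect is the bookkeeping of aligning expressions, since the analysis itself is trivial here. The output of \texttt{area\_parametrized} is phrased with \texttt{deriv (gParm $\gamma$)}, while Lemma~\ref{lem:ibp} may be phrased with the integrand as $g\cdot f'$ or $f'\cdot g$. I would state an auxiliary equality of the integrands with \texttt{funext} and \texttt{ring}, then rewrite rather than rely on \texttt{simp} to discover it. With that in place, the proof is a short chain of \texttt{rw} steps followed by \texttt{ring}.
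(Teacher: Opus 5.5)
Your proposal is correct and matches the paper's proof: you rewrite $A$ via the reparametrized formula~\eqref{eq:area-reparam}, use Lemma~\ref{lem:ibp} to turn $-\int_0^{2\pi} g f'$ into $+\int_0^{2\pi} f g'$, and cancel the factor $\tfrac12$. Your extra integrability bookkeeping, from continuity of $\texttt{deriv}~\gamma.\texttt{curve}$ (the hypothesis \texttt{hdc} that \texttt{area\_simplified} carries), is the side condition the linearity split needs and fits the paper's setup.
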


\begin{proof}
	From~\eqref{eq:area-reparam}, $A = \tfrac{1}{2}\int_0^{2\pi}(fg' - gf')\,d\theta$.
	By Lemma~\ref{lem:ibp}, $\int fg' = -\int gf'$, so
	$\int(fg' - gf') = \int fg' + \int fg' = 2\int fg'$.  Therefore $A = \int fg'$.
\end{proof}

\subsection{Step 3: The AM--GM Inequality}

\begin{lemma}[\texttt{fParm\_gParm\_ineq}]\label{lem:amgm}
	For all $\theta \in \mathbb{R}$:
	\[
		2\,f(\theta)\,g'(\theta) \leq [f(\theta)]^2 + [g'(\theta)]^2.
	\]
\end{lemma}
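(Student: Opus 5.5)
The plan is to reduce the statement to the elementary fact that a square of a real number is nonnegative. Fix $\theta \in \mathbb{R}$ and abbreviate $u = f(\theta)$ and $v = g'(\theta)$, where $g' = \texttt{deriv}~(\texttt{gParm}~\gamma)$. No analytic input is needed: neither differentiability nor periodicity of $f$ and $g$ enters. The inequality is a pointwise statement about two real numbers, namely $2uv \leq u^2 + v^2$.

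The key identity is
\[
	u^2 + v^2 - 2uv = (u - v)^2 \geq 0,
\]
which rearranges directly to the claim. In the formalization, I would state the lemma for arbitrary $\theta$ and unfold nothing. After introducing $\theta$, the goal \texttt{2 * fParm gamma theta * deriv (gParm gamma) theta <= fParm gamma theta ^ 2 + deriv (gParm gamma) theta ^ 2} is closed by
\texttt{nlinarith [sq\_nonneg (fParm gamma theta - deriv (gParm gamma) theta)]}.
Mathlib's \texttt{two\_mul\_le\_add\_sq} also states exactly $2ab \leq a^2 + b^2$, and could be applied instead after matching the multiplication associativity.

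There is essentially no obstacle. The only possible friction is syntactic: \texttt{fParm} and \texttt{deriv (gParm gamma)} are opaque terms, so the tactic must treat them as atoms rather than try to unfold them. Providing the explicit square hint, or using \texttt{two\_mul\_le\_add\_sq} with a \texttt{mul\_assoc} rewrite, avoids this.

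This pointwise bound is what feeds the next step. Integrating it over $[0,2\pi]$ via monotonicity of the interval integral, which needs integrability of $fg'$, $f^2$ and $(g')^2$, and combining with Lemma~\ref{lem:area-simple}, gives $A \leq \tfrac12\int_0^{2\pi}(f^2 + (g')^2)\,d\theta$.
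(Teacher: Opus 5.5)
Your proposal is correct and matches the paper's proof: the paper also reduces the claim to the pointwise AM--GM bound $2uv \leq u^2 + v^2$ via $(u-v)^2 \geq 0$. In Lean it likewise closes the goal with \texttt{nlinarith}, using the hint \texttt{sq\_nonneg} applied to $f(\theta) - g'(\theta)$.
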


\begin{proof}
	This is the AM--GM inequality $2uv \leq u^2 + v^2$, which follows from $(u-v)^2 \geq 0$.	In Lean, it is proved by	\texttt{nlinarith~[sq\_nonneg~(fParm~$\gamma$~$\theta$~-~deriv~(gParm~$\gamma$)~$\theta$)]}	(or equivalently by	\texttt{have h : 0~$\leq$~(f~-~g')\^{}2~:=~sq\_nonneg~\_;~nlinarith [h]}).
\end{proof}

\begin{corollary}[\texttt{area\_inequality}]\label{cor:area-ineq}
	$A \leq \dfrac{1}{2}\displaystyle\int_0^{2\pi}\bigl([f(\theta)]^2 + [g'(\theta)]^2\bigr)\,d\theta.$
\end{corollary}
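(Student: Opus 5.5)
The plan is to combine Lemma~\ref{lem:area-simple} with the pointwise bound of Lemma~\ref{lem:amgm}, using monotonicity of the interval integral. First, I rewrite $A$ as $\int_0^{2\pi} f(\theta)\,g'(\theta)\,d\theta$ via \texttt{area\_simplified}. The goal then becomes
\[
\int_0^{2\pi} f g'\,d\theta \;\leq\; \frac{1}{2}\int_0^{2\pi}\bigl(f^2 + (g')^2\bigr)\,d\theta .
\]
Multiplying through by $2$ (or pulling the constant $\tfrac12$ inside with \texttt{intervalIntegral.integral\_const\_mul}), it suffices to prove
\[
\int_0^{2\pi} 2 f g'\,d\theta \leq \int_0^{2\pi}\bigl(f^2+(g')^2\bigr)\,d\theta .
\]

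Second, I apply \texttt{intervalIntegral.integral\_mono\_on}, or \texttt{integral\_mono} for $0 \le 2\pi$, which is available because $\pi>0$. The pointwise hypothesis $2f(\theta)g'(\theta)\le f(\theta)^2+g'(\theta)^2$ is exactly Lemma~\ref{lem:amgm}.

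The remaining obligations are the two integrability side conditions on $[0,2\pi]$, for $2fg'$ and for $f^2+(g')^2$. I expect these to be the main (if routine) obstacle. I would discharge them through continuity: $f$ is continuous as the composition of $\gamma.\texttt{curve}$, a coordinate projection, and an affine map. For $g'$, I use the hypothesis from Lemma~\ref{lem:ibp} that $\texttt{deriv}~\gamma.\texttt{curve}$ is continuous, together with Lemma~\ref{lem:reparam-deriv}, which expresses $g'(\theta)$ as $y'(L\theta/(2\pi))\cdot L/(2\pi)$. This is a continuous function composed with a projection and scaled by a constant. Continuous functions are interval integrable (\texttt{Continuous.intervalIntegrable}), and products and sums of such functions are continuous, which closes both side goals.

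Alternatively, for the second integrand, Lemma~\ref{lem:arc-constraint} shows $(g')^2 = (L/2\pi)^2 - (f')^2$. However, the direct continuity argument is simpler and avoids needing the arc-length hypothesis at this stage. A final \texttt{linarith} after rewriting the factor $\tfrac12$ completes the corollary.
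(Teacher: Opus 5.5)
Your proposal is correct and matches the paper's proof step for step. You rewrite $A$ via \texttt{area\_simplified}, apply \texttt{intervalIntegral.integral\_mono\_on} with the pointwise AM--GM bound of Lemma~\ref{lem:amgm}, and discharge the integrability side goals by continuity; the paper does the last step with \texttt{fun\_prop}, under the hypothesis that \texttt{deriv}~$\gamma$.\texttt{curve} is continuous. The only cosmetic difference is that the paper states the pointwise bound directly as $fg' \le \tfrac12\bigl(f^2+(g')^2\bigr)$ and closes with \texttt{linarith}, instead of first clearing the factor $\tfrac12$.
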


\begin{proof}
	From Lemma~\ref{lem:area-simple}, $A = \int_0^{2\pi} fg'\,d\theta$.  By
	Lemma~\ref{lem:amgm}, $fg' \leq \tfrac{1}{2}(f^2 + (g')^2)$ pointwise, so
	$\int fg' \leq \tfrac{1}{2}\int(f^2 + (g')^2)$ by monotonicity of integration
	(\texttt{intervalIntegral.integral\_mono\_on}).
\end{proof}

\begin{lstlisting}
lemma area_inequality (gamma : SimpleClosedC1Curve 2) (hdc : Continuous (deriv gamma.curve)) :
    area gamma 0 gamma.length <=
      (1/2) * integral t in (0:Real)..(2*Real.pi),
        ((fParm gamma t)^2 + (deriv (gParm gamma) t)^2) := by
  rw [area_simplified gamma hdc]
  have hpw : forall t in Set.Icc (0:Real) (2*Real.pi),
      fParm gamma t * deriv (gParm gamma) t <=
      (1/2) * ((fParm gamma t)^2 + (deriv (gParm gamma) t)^2) :=
    fun t _ => by linarith [fParm_gParm_ineq gamma t]
  have hmono := intervalIntegral.integral_mono_on
    (by positivity) (by fun_prop) (by fun_prop) hpw
  linarith
\end{lstlisting}

\subsection{Step 4: Application of Wirtinger's Inequality}

We may assume, by translating $\gamma$ in the $x$-direction if necessary, that $\int_0^{2\pi} f(\theta)\,d\theta = 0$, i.e.\ the curve's centroid in the $x$-direction lies at the origin.  Under this assumption, Wirtinger's inequality (Theorem~\ref{thm:wirtinger}) applies to $f$, giving $\int_0^{2\pi} f^2 \leq \int_0^{2\pi} (f')^2$.

Adding $\int_0^{2\pi} (g')^2\,d\theta$ to both sides of this inequality:
\begin{equation}
	\int_0^{2\pi}\bigl(f^2 + (g')^2\bigr)\,d\theta
	\leq \int_0^{2\pi}\bigl((f')^2 + (g')^2\bigr)\,d\theta.
\end{equation}

Combined with Corollary~\ref{cor:area-ineq}:
\begin{equation}\label{eq:after-wirtinger}
	A \leq \frac{1}{2}\int_0^{2\pi}\bigl((f')^2 + (g')^2\bigr)\,d\theta.
\end{equation}

This is assembled in the lemma \texttt{addition\_ineq}:

\begin{lstlisting}
lemma addition_ineq (gamma : SimpleClosedC1Curve 2) (hdc : Continuous (deriv gamma.curve))
    (a b : Nat -> Real)
    (h_parseval : integral t in (0:Real)..(2*Real.pi), (fParm gamma t)^2 =
        Real.pi * sum' n : Nat+, ((a n)^2 + (b n)^2))
    (h_wirtinger : integral t in (0:Real)..(2*Real.pi), (deriv (fParm gamma) t)^2 =
        Real.pi * sum' n : Nat+, ((n:Real)^2 * ((a n)^2 + (b n)^2)))
    (hsum : Summable (fun n : Nat+ => (n:Real)^2 * ((a n)^2 + (b n)^2))) :
    area gamma 0 gamma.length <=
      (1/2) * integral t in (0:Real)..(2*Real.pi),
        ((deriv (fParm gamma) t)^2 + (deriv (gParm gamma) t)^2) := by
  have h_wirtingers := apply_Wirtingers_ineq gamma a b h_parseval h_wirtinger hsum
  have h_int_ineq :
      integral t in (0:Real)..(2*Real.pi), ((fParm gamma t)^2 + (deriv (gParm gamma) t)^2) <=
      integral t in (0:Real)..(2*Real.pi), ((deriv (fParm gamma) t)^2 + (deriv (gParm gamma) t)^2) := by
    rw [intervalIntegral.integral_add (by fun_prop) (by fun_prop),
        intervalIntegral.integral_add (by fun_prop) (by fun_prop)]
    linarith
  calc area gamma 0 gamma.length
      <= (1/2) * integral t in (0:Real)..(2*Real.pi), ((fParm gamma t)^2 + (deriv (gParm gamma) t)^2) :=
          area_inequality gamma hdc
    _ <= (1/2) * integral t in (0:Real)..(2*Real.pi), ((deriv (fParm gamma) t)^2 + (deriv (gParm gamma) t)^2) :=
          mul_le_mul_of_nonneg_left h_int_ineq (by norm_num)
\end{lstlisting}

\subsection{Step 5: The Arc-Length Constraint and the Final Bound}

By Lemma~\ref{lem:arc-constraint}, under arc-length parametrization: $[f'(\theta)]^2 + [g'(\theta)]^2 = (L/(2\pi))^2$ for all $\theta$.

Substituting into~\eqref{eq:after-wirtinger}:
\begin{equation}
	A
	\leq \frac{1}{2}\int_0^{2\pi}\left(\frac{L}{2\pi}\right)^2 d\theta
	= \frac{1}{2}\cdot\left(\frac{L}{2\pi}\right)^2\cdot 2\pi
	= \frac{L^2}{4\pi}.
\end{equation}

\begin{lstlisting}
lemma apply_fParm_gParm_deriv_sq_sum (gamma : SimpleClosedC1Curve 2)
    (hdc : Continuous (deriv gamma.curve)) (h_arc : IsArcLengthParametrized gamma)
    (a b : Nat -> Real) (h_parseval h_wirtinger hsum) :
    area gamma 0 gamma.length <=
      (1/2) * integral _ in (0:Real)..(2*Real.pi), (gamma.length^2 / (4*Real.pi^2)) := by
  calc area gamma 0 gamma.length
      <= (1/2) * integral t in (0:Real)..(2*Real.pi),
            ((deriv (fParm gamma) t)^2 + (deriv (gParm gamma) t)^2) :=
          addition_ineq gamma hdc a b h_parseval h_wirtinger hsum
    _ = (1/2) * integral t in (0:Real)..(2*Real.pi), (gamma.length^2 / (4*Real.pi^2)) := by
          congr 1; apply intervalIntegral.integral_congr; intro t _
          have hsq := fParm_gParm_deriv_sq_sum gamma t
          rw [due_to_arc_length_parametrization gamma h_arc, one_mul] at hsq
          exact hsq.trans (by ring)
\end{lstlisting}

\subsection{The Main Theorem}

\begin{theorem}[Isoperimetric Inequality, Lean Formalization]\label{thm:main}
	For any simple closed arc-length-parametrized C\textsuperscript{1} curve $\gamma$ in	$\mathbb{R}^2$ with $\texttt{Continuous~(deriv~$\gamma$.curve)}$, and given	Fourier sequences $a, b : \mathbb{N} \to \mathbb{R}$ satisfying the Parseval and	Wirtinger identities for $f = \texttt{fParm}~\gamma$, we have:
	\[
		\texttt{area}~\gamma~0~\gamma.\texttt{length} \leq \frac{(\gamma.\texttt{length})^2}{4\pi}.
	\]
\end{theorem}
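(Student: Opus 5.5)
The plan is to chain the previously established lemmas and finish with an explicit evaluation of a constant integral. First I invoke \texttt{apply\_fParm\_gParm\_deriv\_sq\_sum} with the given hypotheses: continuity of \texttt{deriv~$\gamma$.curve}, \texttt{IsArcLengthParametrized~$\gamma$}, the sequences $a,b$, the Parseval identity for $\int_0^{2\pi} f^2$, the Wirtinger identity for $\int_0^{2\pi}(f')^2$, and the summability of $n^2(a_n^2+b_n^2)$. That lemma packages the whole chain
\[
A = \int_0^{2\pi} f g' \;\le\; \tfrac12\int_0^{2\pi}\bigl(f^2+(g')^2\bigr) \;\le\; \tfrac12\int_0^{2\pi}\bigl((f')^2+(g')^2\bigr) \;=\; \tfrac12\int_0^{2\pi}\frac{L^2}{4\pi^2}\,d\theta .
\]
Its ingredients are Lemma~\ref{lem:area-simple}, Corollary~\ref{cor:area-ineq}, the comparison of Parseval sums in \texttt{apply\_Wirtingers\_ineq} (term by term, using $n^2\ge 1$), and Lemma~\ref{lem:arc-constraint}. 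So the remaining task is only to show that the right-hand side equals $L^2/(4\pi)$.

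For that last step I rewrite the integral of a constant over $[0,2\pi]$ with \texttt{intervalIntegral.integral\_const}, which gives $(2\pi-0)\cdot L^2/(4\pi^2)$. The expression $\tfrac12\cdot 2\pi\cdot L^2/(4\pi^2)$ then simplifies to $L^2/(4\pi)$ by \texttt{field\_simp} using $\pi\neq 0$ (\texttt{Real.pi\_ne\_zero}), followed by \texttt{ring}. Written as a \texttt{calc} block, this is the bound from \texttt{apply\_fParm\_gParm\_deriv\_sq\_sum} followed by an equality step closed by these rewrites; \texttt{smul\_eq\_mul} may be needed because \texttt{integral\_const} produces a scalar multiplication.

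All the genuine analytic content sits in the earlier lemmas, so I expect the main obstacle to be bookkeeping rather than mathematics. The hypotheses \texttt{h\_parseval}, \texttt{h\_wirtinger} and \texttt{hsum} have to be threaded through with exactly the types expected by \texttt{addition\_ineq}. The arithmetic normalization also has to close cleanly, since $4\pi^2$ versus $(2\pi)^2$ can leave \texttt{field\_simp} with mismatched forms; I would try \texttt{field\_simp; ring} first, and if needed fall back to rewriting $\pi^2=\pi\cdot\pi$ and using \texttt{div\_eq\_div\_iff} with positivity of $\pi$. One conceptual caveat should be made explicit: the zero-mean assumption on $f$ needed for Wirtinger is not proved here but is encoded in the hypotheses $a_0$-free Parseval and Wirtinger identities. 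The theorem as stated is therefore conditional on those identities, and no translation argument is needed inside the proof.
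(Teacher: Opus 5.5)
Your proposal is correct and matches the paper's proof essentially line for line: a two-step \texttt{calc} that first invokes \texttt{apply\_fParm\_gParm\_deriv\_sq\_sum} with the given hypotheses, then evaluates the constant integral via \texttt{intervalIntegral.integral\_const}, \texttt{smul\_eq\_mul}, and \texttt{field\_simp; ring} using $\pi \neq 0$. Your remark that the zero-mean condition on $f$ is never proved, but is built into the hypothesis \texttt{h\_parseval} (its right-hand side has no $a_0$ term), is also accurate.
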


\begin{lstlisting}
theorem isoperimetric_inequality (gamma : SimpleClosedC1Curve 2)
    (hdc : Continuous (deriv gamma.curve))
    (h_arc : IsArcLengthParametrized gamma)
    (a b : Nat -> Real)
    (h_parseval : integral t in (0:Real)..(2*Real.pi), (fParm gamma t)^2 =
        Real.pi * sum' n : Nat+, ((a n)^2 + (b n)^2))
    (h_wirtinger : integral t in (0:Real)..(2*Real.pi), (deriv (fParm gamma) t)^2 =
        Real.pi * sum' n : Nat+, ((n:Real)^2 * ((a n)^2 + (b n)^2)))
    (hsum : Summable (fun n : Nat+ => (n:Real)^2 * ((a n)^2 + (b n)^2))) :
    area gamma 0 gamma.length <= gamma.length^2 / (4 * Real.pi) := by
  calc area gamma 0 gamma.length
      <= (1/2) * integral _ in (0:Real)..(2*Real.pi), (gamma.length^2 / (4*Real.pi^2)) :=
          apply_fParm_gParm_deriv_sq_sum gamma hdc h_arc a b h_parseval h_wirtinger hsum
    _ = gamma.length^2 / (4 * Real.pi) := by
          rw [intervalIntegral.integral_const, smul_eq_mul]
          have hpi : Real.pi != 0 := Real.pi_ne_zero
          field_simp; ring
\end{lstlisting}

The \texttt{calc} chain has two steps.  The first applies \texttt{apply\_fParm\_gParm\_deriv\_sq\_sum}, combining all previous work.  The second evaluates the constant integral: $\int_0^{2\pi} c\,d\theta = c \cdot 2\pi$, so $(1/2) \cdot (L^2/(4\pi^2)) \cdot 2\pi = L^2/(4\pi)$, closed by \texttt{field\_simp} and \texttt{ring}.

\section{Formalization Challenges and Design Decisions}
\label{sec:challenges}

Formalizing Hurwitz's proof required navigating several technical obstacles that do not appear in textbook presentations.  We document these here, as they may be of independent interest for future formalizations of analysis in Lean~4.

\subsection{Interchanging Sums and Integrals}

A recurring challenge was justifying the interchange of infinite sums and integrals.Textbook proofs typically perform this interchange with a brief remark, but Lean requires an explicit invocation of the Dominated Convergence Theorem or a related result.  Mathlib's relevant lemma is \texttt{MeasureTheory.integral\_tsum\_of\_summable\_integral\_norm}, which requires:
\begin{enumerate}
	\item For each $n$, the summand $x \mapsto f_n(x)$ is measurable.
	\item The sum of $L^1$ norms $\sum_n \int \|f_n(x)\|\,d\mu$ is finite.
\end{enumerate}
In our setting (Fourier series over $[-\pi, \pi]$), measurability is automatic for continuous functions, and the $L^1$ summability reduces to the summability of $\sum_n(\|a_n\| + \|b_n\|)$ by the bound $\|a_n\cos(nx) + b_n\sin(nx)\| \leq \|a_n\| + \|b_n\|$. Nevertheless, expressing this as a \texttt{Summable} hypothesis required careful formulation of the hypotheses \texttt{hF\_int} and \texttt{hF\_sum} in \texttt{Parsevals\_thm}.

\subsection{Term-by-Term Differentiation}

Differentiating a Fourier series term by term requires both pointwise differentiability of each term and uniform convergence of the derivative series.  Lean's relevant theorem is\\ \texttt{hasDerivAt\_of\_tendstoUniformly}, which requires:
\begin{enumerate}
	\item Each partial sum $f_N$ has a derivative at every point.
	\item The derivative partial sums converge uniformly.
	\item The partial sums $f_N(x_0)$ converge for some $x_0$.
\end{enumerate}
The main subtlety in our case was an index shift: the Fourier series is indexed over $\mathbb{N}^+$, but the Lean derivative theorem works with $\mathbb{N}$-indexed partial sums.  We bridged this gap by proving the identity $\sum_{n=0}^{N-1} D(n+1, x) = \sum_{n=0}^{N} D(n, x)$ (using \texttt{Finset.sum\_range\_succ'}) and shifting the convergence index accordingly.

\subsection{EuclideanSpace and PiLp Projections}

Mathlib models $\mathbb{R}^n$ as \texttt{EuclideanSpace~$\mathbb{R}$~(Fin~n)}, which is definitionally equal to \\ \texttt{PiLp~2~(fun~\_~:~Fin~n~=>~$\mathbb{R}$)}.  While this provides a rich API, working with coordinate projections required some care.

The projection onto coordinate $i$ is the continuous linear map \texttt{PiLp.proj~2~(fun~\_~:~Fin~n~=>~$\mathbb{R}$)~i}, which has a known \texttt{HasFDerivAt}.  To relate the derivative of the coordinate function $x(t) = \gamma(t).0$ to the derivative of $\gamma$, we used \texttt{HasFDerivAt.comp\_hasDerivAt}, composing the Fr\'echet derivative of the projection with the Gateaux derivative (HasDerivAt) of the curve.  The $n = 2$ case required explicit use of \texttt{Fin.sum\_univ\_two} to decompose the Euclidean norm.

\subsection{Integration by Parts}

Lean~4's Mathlib provides integration by parts for interval integrals via \\ \texttt{intervalIntegral.integral\_eq\_sub\_of\_hasDerivAt} (the FTC).  To use this for $\int_0^{2\pi} (fg)' = [fg]_0^{2\pi}$, we needed:
\begin{itemize}
	\item $t \mapsto f(t)g(t)$ has a derivative everywhere (from the product rule
	      \texttt{HasDerivAt.mul}).
	\item The derivative $f'g + fg'$ is integrable on $[0, 2\pi]$ (from continuity, established
	      via the continuity of $\texttt{deriv}~\gamma.\texttt{curve}$).
	\item The boundary term $f(2\pi)g(2\pi) - f(0)g(0) = 0$ (from periodicity).
\end{itemize}
This required first establishing the continuity of $f$, $g$, $f'$, $g'$, which in turn required connecting the continuity of $\texttt{deriv}~\gamma.\texttt{curve}$ to the continuity of the reparametrized derivatives via the chain rule.  The proof of \texttt{IBP\_to\_fParm\_gParm} is correspondingly the longest lemma in the file.

\subsection{Indexing Conventions}

A recurring minor issue was the mismatch between $\mathbb{N}$-indexed and $\mathbb{N}^+$-indexed sums.  Fourier coefficients are naturally indexed by $\mathbb{N}^+$ (since $n = 0$ contributes only the constant term, handled separately), but many Mathlib lemmas work with $\mathbb{N}$.  We used the equivalence \texttt{Equiv.pnatEquivNat} (which identifies $\mathbb{N}^+$ with $\{1, 2, 3, \ldots\} \subset \mathbb{N}$) together with \texttt{summable\_pnat\_iff\_summable\_nat} and \texttt{summable\_pnat\_iff\_summable\_succ} to convert between the two conventions.

\subsection{Hypotheses for Parseval and Wirtinger}

In the main theorem \texttt{isoperimetric\_inequality}, the Parseval and Wirtinger identities for $\texttt{fParm}~\gamma$ are assumed as hypotheses rather than derived from first principles.  This is because the derivation would require:
\begin{enumerate}
	\item Establishing that $\texttt{fParm}~\gamma$ can be expanded as a Fourier series
	      with specific coefficients (which requires square-integrability and regularity).
	\item Connecting the abstract Parseval framework in \texttt{IsoperimetricInequality.Basic}
	      (which works with arbitrary sequences $a, b$) to the specific function
	      $\texttt{fParm}~\gamma$.
\end{enumerate}
Closing this gap is an open problem for future work.

\section{Related Work}
\label{sec:related}

\subsection{Other Proofs of the Isoperimetric Inequality}

The classical isoperimetric inequality admits many proofs, spanning different areas of
mathematics~\cite{Cabre2017, Osserman1978}.  We briefly survey the main approaches.

\paragraph{Steiner's symmetrization (1838).}
Steiner showed that any non-circular convex region can be symmetrized (reflected about lines) to increase its area without changing its perimeter.  Repeated symmetrization
converges to a circle.  However, Steiner's original argument assumed that an optimizer exists; this was made rigorous by the introduction of direct methods in the calculus of variations.

\paragraph{Weierstrass' variational proof (ca.\ 1870).}
Weierstrass gave the first complete proof using the calculus of variations, establishing both the existence of an optimizing curve and its characterization as a circle.  The proof uses the theory of curves satisfying the Euler--Lagrange equation for the area functional~\cite{docarmo2016}.

\paragraph{Hurwitz's Fourier proof (1902).}
This is the approach we formalize.  It is arguably the most elegant: the entire proof fits on a single page, and the key ideas (Parseval, Wirtinger, AM--GM) are algebraically transparent.

\paragraph{Bonnesen's inequality.}
Bonnesen~\cite{bonnesen1987, Osserman1978} proved a strengthened version of the isoperimetric inequality: $L^2 - 4\pi A \geq \pi^2(R - r)^2$, where $R$ and $r$ are the circumradius and inradius of the curve.  This gives a quantitative measure of how far the curve is from being a circle.

\paragraph{Geometric measure theory.}
The isoperimetric inequality generalizes to higher dimensions and to non-smooth sets via geometric measure theory.  The general $n$-dimensional version states that among all sets of volume $V$ in $\mathbb{R}^n$, the ball minimizes the surface area.

\subsection{Other Formalizations of the Isoperimetric Inequality}

Formal verification of the isoperimetric inequality has been attempted in several proof assistants.

\paragraph{Coq / Isabelle.}
Several partial formalizations of the isoperimetric inequality in Coq and Isabelle are known in the formal mathematics community, but to our knowledge none have achieved a complete formalization of a proof for general smooth curves in the style of Hurwitz. Formalizations of related results (Brunn--Minkowski inequality, isoperimetric inequality for polygons) have been achieved in Isabelle/HOL.

\paragraph{Lean~4 / Mathlib.}
The Mathlib library for Lean~4 contains Fourier theory on compact groups (\texttt{Mathlib.Analysis.Fourier}) and various forms of Parseval's theorem.  However, the specific formulation for classical real Fourier series on $[-\pi, \pi]$ with explicit coefficient sequences, as needed for Hurwitz's proof, is not directly available and required bespoke development.

\paragraph{Leanblueprint and Sphere Eversion.}
The Lean~4 formalizations community has produced several large-scale formal proofs of classical theorems, including the Liquid Tensor Experiment~\cite{lte2022} and sphere eversion.  The isoperimetric inequality represents a contribution in classical real analysis, complementing these projects.

\subsection{Related Formalizations of Fourier Analysis}

Fourier analysis has been formalized in several systems.  In Lean~4, Mathlib contains the abstract Fourier transform on locally compact abelian groups and Plancherel's theorem in $L^2$.  Our work differs in working with the classical setting of coefficient sequences and pointwise convergence, rather than the abstract $L^2$ setting, which is more directly aligned with how Hurwitz's proof is presented in classical analysis textbooks.

\section{Conclusion}
\label{sec:conclusion}

We have presented a complete formalization of Hurwitz's proof of the two-dimensional isoperimetric inequality in Lean~4.  The formalization is organized in two files: \texttt{IsoperimetricInequality.Basic}, which develops the Fourier series foundations (orthogonality, Parseval's theorem, uniform convergence, differentiability, Wirtinger's inequality), and \texttt{IsoperimetricInequality.Adolf\_Hurwitz\_proof}, which carries out the five-step proof (area formula, integration by parts, AM--GM, Wirtinger, arc-length constraint).

\subsection{Summary of Main Results}

The principal formally verified results are:
\begin{enumerate}
	\item \textbf{Orthogonality lemmas} (\texttt{integration\_cos\_cos\_zero},
	      \texttt{integration\_sin\_sin\_zero},\\  \texttt{integration\_sin\_cos\_zero},
	       \texttt{integration\_cos\_cos\_pi}, \texttt{integration\_sin\_sin\_pi}):
	      the five trigonometric orthogonality integrals over $[-\pi, \pi]$.
	\item \textbf{Parseval's theorem} (\texttt{Parsevals\_thm}):
	      $(1/\pi)\int_{-\pi}^\pi f^2 = \tfrac{1}{2}a_0^2 + \sum_{n\geq 1}(a_n^2 + b_n^2)$.
	\item \textbf{Weierstrass M-test for Fourier series}
	      (\texttt{fourierSeries\_uniformlyConvergence}):
	      uniform convergence under $\sum(\|a_n\|+\|b_n\|)<\infty$.
	\item \textbf{Term-by-term differentiability} (\texttt{fourierSeries\_hasDerivAt}):
	      $(\text{fourierSeries}~a~b)' = \text{fourierDeriv}~a~b$ under $\sum n(\|a_n\|+\|b_n\|)<\infty$.
	\item \textbf{Wirtinger's inequality} (identity form, \texttt{Wirtingers\_inequality}):
	      $(1/\pi)\int_{-\pi}^\pi (f')^2 = \sum n^2(a_n^2+b_n^2)$.
	\item \textbf{Area formula} (\texttt{area\_parametrized}, \texttt{area\_simplified}):
	      $A = \int_0^{2\pi} f g'\,d\theta$.
	\item \textbf{Integration by parts} (\texttt{IBP\_to\_fParm\_gParm}):
	      $\int fg' = -\int gf'$ for $2\pi$-periodic functions.
	\item \textbf{Arc-length constraint} (\texttt{fParm\_gParm\_deriv\_eq\_sum\_const}):
	      $(f')^2 + (g')^2 = (L/(2\pi))^2$ under arc-length parametrization.
	\item \textbf{Main theorem} (\texttt{isoperimetric\_inequality}):
	      $A \leq L^2/(4\pi)$.
\end{enumerate}

\subsection{Open Problems and Future Work}

Several aspects of the formalization remain to be completed.

\paragraph{Closing the Parseval hypothesis.}
The main theorem currently assumes the Parseval and Wirtinger identities for
$\texttt{fParm}~\gamma$ as hypotheses.  The natural next step is to derive these from the
definition of $\texttt{fParm}~\gamma$, using the Fourier series expansion of smooth
periodic functions.  This would require connecting $\texttt{fParm}~\gamma$ to its Fourier
coefficients via the integral formulas, which in turn requires integrability results for
the curve's coordinates.

\paragraph{Equality case.}
The isoperimetric inequality is sharp: equality $A = L^2/(4\pi)$ holds if and only if
$\gamma$ is a circle.  Proving this in Lean would require characterising when equality
holds in each step: when equality holds in AM--GM ($f = g'$ everywhere), when equality
holds in Wirtinger's inequality ($f = a_1\cos\theta + b_1\sin\theta$), and when the
resulting curve is a circle.

\paragraph{Generalization to lower regularity.}
Our formalization requires the curve to be C\textsuperscript{1} with a continuous
derivative.  Extensions to curves of bounded variation or to the general $W^{1,2}$ setting
(which includes the $L^2$ isoperimetric inequality) would require different analytic tools.

\paragraph{Higher-dimensional generalization.}
The isoperimetric inequality in $\mathbb{R}^n$ states that balls minimize surface area
for a given enclosed volume.  Formalizing this would require the co-area formula, the
Brunn--Minkowski inequality, or the machinery of geometric measure theory.

\newpage

\bibliographystyle{plain}
\bibliography{references}

\end{document}